\documentclass[english]{amsart}
\usepackage{amsfonts, amsmath, amssymb, amscd, amsthm, array, graphicx}
\usepackage[latin9]{inputenc}
\usepackage{amsmath}
\usepackage{mathabx}
\usepackage{stmaryrd}
\usepackage{graphicx}
\usepackage{amssymb}
\usepackage{dsfont}
\usepackage{babel}
\usepackage{color}
\usepackage[all]{xy}

\usepackage{mathtools}
\usepackage{hyperref,url}

\textwidth=6in \textheight=8.7in
\tolerance=9000 \emergencystretch=5pt \vfuzz=2pt
\parskip=1.5mm

\makeatletter
\def\blfootnote{\xdef\@thefnmark{}\@footnotetext}
\makeatother
\usepackage{tikz}
\usetikzlibrary{arrows,shapes,snakes,automata,backgrounds,petri}
\usetikzlibrary{automata} 
\usetikzlibrary[automata] 
\usetikzlibrary{matrix,decorations.pathreplacing}
\usepackage{geometry}
\usepackage{pdflscape}
\usepackage{graphicx}
\usetikzlibrary{external,automata,trees,positioning,shadows,arrows,shapes.geometric}

\newlength\Textht
\setlength\Textht{\textheight}

\newtheorem{thm}{Theorem}[section]
\newtheorem*{thm*}{Theorem}
\newtheorem{cor}[thm]{Corollary}
\newtheorem{lem}[thm]{Lemma}
\newtheorem{prop}[thm]{Proposition}

\theoremstyle{definition}

\newtheorem{rem}[thm]{Remark}

\newcommand{\id}{\operatorname{Id}}
\newcommand\ZZ{\mathbb{Z}}

\newcommand{\QQ}{{\mathbb Q}}

\newcommand{\Mat}{\operatorname{M}}
\newcommand{\GL}{\operatorname{GL}}

\newcommand{\Aut}{\operatorname{Aut}}

\newcommand{\End}{\operatorname{End}}
\newcommand{\Inn}{\operatorname{Inn}}

\newcommand{\gen}[1]{\left\langle#1\right\rangle}

\newcommand{\pres}[2]{\left\langle#1 \mid #2\right\rangle}
\newcommand{\ncl}[1]{\ll \!#1 \!\gg}

\newcommand{\R}[1]{{\color{red} #1}}

\newcounter{mallikacomments}

\begin{document}

\title{Twisted conjugacy in $BS(n, 1)$}

\author{Oorna Mitra}
\address{Indian Institute of Science Education and Research (IISER) Kolkata, Mohanpur, Nadia - 741246, West Bengal, India.}
\email{urna.mitra@gmail.com}

\author{Mallika Roy}
\address{Harish-Chandra Research Institute, HBNI, Chhatnag Road, Jhunsi, Prayagraj (Allahabad) 211019, India.} \email{mallikaroy@hri.res.in}

\author{Enric Ventura}
\address{Departament de Matem\`atiques, Universitat Polit\`ecnica de Catalunya, CATAlONIA.} \email{enric.ventura@upc.edu}

\subjclass{Primary 20F10.}

\keywords{Solvable Baumslag-Solitar groups, Twisted conjugacy problem, Orbit decidability.}

\begin{abstract}
In this article, we solve the twisted conjugacy problem for solvable Baumslag--Solitar groups $BS(n,1)$, i.e., we propose an algorithm which, given two elements $u,v \in BS(n,1)$ and an automorphism $\varphi \in \Aut(BS(n,1))$, decides whether $v=(w\varphi)^{-1} u w$ for some $w\in BS(n,1)$. Also we prove that the automorphism group $\Aut(BS(n,1))$ is orbit decidable --- given two words on the generators $u,v\in F(X)$, decide whether the corresponding elements $u,v\in G$ can be mapped to each other by some automorphism in $\Aut(BS(n,1))$.
\end{abstract}

\maketitle



\section{Introduction}

The study of Algorithmic Group Theory started with three decision problems, namely the \textit{Word Problem}, the \textit{Conjugacy Problem} and the \textit{Isomorphism Problem}, which are also known as the three \textit{Dehn Problems} (see~\cite{Dehn}): 

\noindent \textbf{\boldmath Word Problem, $WP(G)$:} For a finite presentation $G=\pres{X}{R}$, \emph{given a word on the generators $w\in F(X)$, decide whether $w$ represents the trivial element in $G$, $w=_G 1$.}

\noindent \textbf{\boldmath Conjugacy Problem, $CP(G)$:} For a finite presentation $G=\pres{X}{R}$, \emph{given two words on the generators $u,v\in F(X)$, decide whether $u$ and $v$ represent conjugate elements in $G$, $u\sim_G v$.}

\noindent \textbf{\boldmath Isomorphism Problem, $IP$:} \emph{Given two finite presentations, $\pres{X}{R}$ and $\pres{Y}{S}$, decide whether they present isomorphic groups, $\pres{X}{R}\simeq \pres{Y}{S}$.}

As it is well known, these three problems are unsolvable in their full generality, and there are lots of interesting results in the literature solving them on certain families of groups, or analyzing the solvability boundary on some others. In the present article we will focus on the conjugacy problem in close relation with another two algorithmic problems: the \textit{Twisted Conjugacy Problem} and the \textit{Orbit Decidability Problem}.

Let $G$ be a group and $\varphi \in \Aut(G)$, $g\mapsto g\varphi$, be an automorphism. One says that two elements $u,v \in G$ are \emph{$\varphi$-twisted conjugated}, denoted $u\sim_\varphi v$, if there exists $x\in G$ such that $v=(x\varphi )^{-1}ux$. It is easy to see that $\sim_\varphi$ is an equivalence relation on $G$, which coincides with the usual conjugation when $\varphi=\id$. This variation of standard conjugacy was first introduced by Reidemeister with clear topological motivations related to Nielsen Fixed Point Theory; see~\cite{R}. The corresponding algorithmic problem is the following.

\noindent \textbf{\boldmath Twisted Conjugacy Problem, $TCP(G)$:} For a finite presentation $G=\pres{X}{R}$, \emph{given an automorphism $\varphi\colon G\to G$ (given by the images of the generators), and two words on the generators, $u,v\in F(X)$, decide whether $u$ and $v$ represent $\varphi$-twisted conjugate elements in $G$, i.e., $u\sim_{\varphi} v$.}

Of course, $CP$ coincides with the particular case of $TCP$ with $\varphi=\id$; so, a positive solution to $TCP(G)$ immediately gives a positive solution to $CP(G)$ (which automatically gives a positive solution to $WP(G)$ as well). However, as one might expect, in an arbitrary group $G$, twisted conjugacy classes are much more complicated to understand than standard conjugacy classes. For instance, in the case of free groups, $CP(F_n)$ is very easy both conceptually and computationally, while $TCP(F_n)$ is solvable but much harder in both senses; see~\cite[Theorem 1.5]{BMMV}. Also, similar to the classical result stating the existence of a finitely presented group $G$ with solvable $WP(G)$ but unsolvable $CP(G)$ (see, for example, \cite{M}), there is a more recent result stating the existence of a finitely presented group $G$ with solvable $CP(G)$ but unsolvable $TCP(G)$; see~\cite[Corollary 4.9]{BMV}.

In~\cite{BMMV} Bogopolski--Martino--Maslakova--Ventura solved the conjugacy problem for free-by-cyclic groups, $CP(F_n\rtimes_{\varphi} \ZZ)$. The proof had two clearly separate ingredients: firstly a solution to the Twisted Conjugacy Problem for free groups, $TCP(F_n)$, and, secondly, a result from Brinkmann~\cite{B} providing an algorithm to decide, given an automorphism $\varphi\in \Aut(F_n)$, and two elements $u,v\in F_n$, whether $v$ is conjugate to some iterated image of $u$, i.e.,  $v\sim u\varphi^k$, for some $k\in \ZZ$. On the other hand, more than 30 years earlier, Miller~\cite{M} already studied the much bigger family of free-by-free groups, and showed that the Conjugacy Problem is already unsolvable within that framework. 

A few years later, Bogopolski--Martino--Ventura~\cite{BMV} realized that, in their solution to $CP(F_n\rtimes_{\varphi} \ZZ)$, the case where the two inputs belonged to outside the normal subgroup $F_n$ ($u,v\not\in F_n$) boils down to $TCP(F_n)$, while the opposite case ($u,v\in F_n$) corresponded directly to Brinkmann's result (note that if $u\in F_n$ and $v\not\in F_n$, or viceversa, then they automatically are not conjugate to each other). This pattern happened to be much more general: in the free-by-free group $G=F_n\rtimes_{\varphi_1,\ldots ,\varphi_m} F_m$, the decision on whether two elements $u,v\not\in F_n$ are conjugated to each other boils down to $TCP(F_n)$ (so, for every free-by-free group, the $CP$ \emph{is} solvable \emph{when restricted} to inputs outside $F_n$), while deciding whether two elements $u,v\in F_n$ are conjugated to each other in $G$, becomes a much harder problem than Brinkmann's one (and mandatorily sometimes unsolvable, according to Miller's result). Much beyond this family, the main result in~\cite{BMV} was the following one, exactly in the same spirit, but applying to \emph{any} short exact sequence of groups:

\begin{thm}[Bogopolski--Martino--Ventura~\cite{BMV}] \label{thm: TCP to CP}
Let $1 \xrightarrow{\hspace{0.2 cm}} F \xrightarrow{\hspace{0.2 cm}\alpha} G \xrightarrow{\hspace{0.2 cm}\beta} H \xrightarrow{\hspace{0.2 cm}} 1$
be an algorithmic short exact sequence of groups such that
 \begin{itemize}
\item[(i)] $F$ has solvable twisted conjugacy problem,
\item[(ii)] $H$ has solvable conjugacy problem, and
\item[(iii)] for every $1\neq h\in H$, the subgroup $\langle h\rangle$ has finite index in its centralizer $C_H(h)$, and there is an algorithm which computes a finite set of coset representatives, $z_{h,1}, \ldots ,z_{h,t_h} \in H$, $C_H(h) =\langle h\rangle z_{h,1} \sqcup \ldots \sqcup \langle h\rangle z_{h,t_h}$.
 \end{itemize}
Then, the following are equivalent:
 \begin{itemize}
\item [(a)] the conjugacy problem for $G$ is solvable,
\item [(b)] the conjugacy problem for $G$ restricted to $F$ is solvable,
\item [(c)] the action subgroup $A_G = \{\varphi_g \, | \, g \in G \} \leqslant \Aut(F)$ is orbit decidable.
 \end{itemize}
\end{thm}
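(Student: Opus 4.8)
The plan is to establish the cycle of implications $(a)\Rightarrow(b)\Rightarrow(c)\Rightarrow(a)$, most of whose weight lies in a single arrow. The implication $(a)\Rightarrow(b)$ is immediate, since $(b)$ is just the restriction of $(a)$ to inputs lying in $F$. The equivalence $(b)\Leftrightarrow(c)$ is essentially a tautology once one unwinds the definitions: writing $\varphi_g\in\Aut(F)$ for the automorphism $x\mapsto g^{-1}xg$ induced by conjugation, two elements $a,b\in F$ satisfy $b=a\varphi_g$ for some $g\in G$ precisely when $a\sim_G b$; thus orbit decidability of $A_G=\{\varphi_g\mid g\in G\}$ is literally the conjugacy problem of $G$ restricted to $F$, the algorithmicity of the sequence being what lets us pass between words in $G$ and the data of the orbit question. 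Hence the whole content of the theorem is the implication $(b)\Rightarrow(a)$, which is where hypotheses $(i)$, $(ii)$ and $(iii)$ enter.

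To solve the conjugacy problem of $G$ in general, I would first push the problem down to $H$. Given $u,v\in G$, I compute $\beta(u),\beta(v)\in H$ and use the solution of the conjugacy problem of $H$ from $(ii)$ to test whether $\beta(u)\sim_H\beta(v)$; if not, then $u\not\sim_G v$ and we stop. Otherwise I produce, by a terminating search (knowing the answer is yes), an element of $H$ conjugating $\beta(u)$ to $\beta(v)$, lift it to $G$, and conjugate $u$ accordingly, so that after this harmless replacement we may assume $\beta(u)=\beta(v)=:h$. If $h=1$ then $u,v\in F$ and deciding $u\sim_G v$ is exactly the problem granted by $(b)$, so this case is settled.

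The remaining and decisive case is $h\neq 1$. Here any conjugator $g$ with $g^{-1}ug=v$ must satisfy $\beta(g)\in C_H(h)$, and by $(iii)$ we may write $C_H(h)=\langle h\rangle z_{h,1}\sqcup\cdots\sqcup\langle h\rangle z_{h,t_h}$ with the $z_{h,j}$ computable. The key trick is to use $u$ itself as a lift of $h$: fixing lifts $w_j\in G$ of $z_{h,j}$, every relevant $g$ can be written $g=u^{k}w_jc$ with $k\in\ZZ$, $c\in F$, and then
\[
g^{-1}ug=c^{-1}w_j^{-1}\,u^{-k}u\,u^{k}\,w_jc=c^{-1}\bigl(w_j^{-1}uw_j\bigr)c ,
\]
so that the infinitely many powers $u^{k}$ collapse, since conjugating $u$ by its own powers fixes $u$. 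Setting $u_j:=w_j^{-1}uw_j$ (which again maps to $h$), the question becomes, for each of the finitely many indices $j$, whether there is $c\in F$ with $c^{-1}u_jc=v$. Writing $u_j=d_ju$ and $v=eu$ with $d_j,e\in F$ and moving the factor $u$ across, this rearranges into a $\psi$-twisted conjugacy equation in $F$ relating $d_j$ and $e$, where $\psi\in\Aut(F)$ is the (computable) conjugation $x\mapsto uxu^{-1}$. Each such instance is decidable by the solution of the twisted conjugacy problem of $F$ from $(i)$, and $u\sim_G v$ holds if and only if some index $j$ yields a positive answer.

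The step I expect to be the crux is the treatment of the infinite part $\langle h\rangle$ of the centralizer $C_H(h)$. A naive search over conjugators fails because there are infinitely many candidates $u^{k}w_jc$; what rescues the argument is precisely that conjugation by $u^{k}$ leaves $u$ untouched, so the whole $\langle h\rangle$-direction is absorbed by the single lift $u$ and only the finitely many coset representatives $z_{h,j}$ survive. This is exactly why hypothesis $(iii)$ is phrased as ``$\langle h\rangle$ has finite index in $C_H(h)$, with computable coset representatives'': the finite index guarantees finitely many twisted conjugacy instances, and computability of the $z_{h,j}$ makes them effectively available. The only other points needing care are routine in the algorithmic setting --- extracting an actual conjugator from the conjugacy problem of $H$ by search, and computing the lifts $w_j$ and the twisting automorphism $\psi$ from the algorithmic short exact sequence.
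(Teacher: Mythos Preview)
The paper does not prove Theorem~\ref{thm: TCP to CP}; it is quoted from Bogopolski--Martino--Ventura~\cite{BMV} and used as a black box, so there is no in-paper proof to compare your attempt against. That said, your sketch is correct and is essentially the original argument from~\cite{BMV}: reduce to $\beta(u)=\beta(v)=h$ via $CP(H)$, treat $h=1$ by hypothesis~(b), and for $h\neq 1$ absorb the cyclic part $\langle h\rangle$ of $C_H(h)$ by lifting $h$ to $u$ itself, leaving finitely many twisted-conjugacy instances in $F$ indexed by the coset representatives $z_{h,j}$. One cosmetic point: in your last step the twisting automorphism that actually appears is $\varphi_u\colon x\mapsto u^{-1}xu$ (or its inverse, depending on how you unwind $c^{-1}d_j(ucu^{-1})=e$), not literally $x\mapsto uxu^{-1}$; this is harmless for decidability but worth writing precisely.
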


The term \emph{algorithmic} referred to a short exact sequence as above, just means the assumption of algorithms for computing images of elements by $\alpha$ and $\beta$, pre-images by $\beta$, and pre-images by $\alpha$ of elements in $\ker \beta =\operatorname{Im} \alpha$; typically, this is the case when $F$, $G$ and $H$ are given by finite presentations, and $\alpha$ and $\beta$ are given by images of the generators; see~\cite{BMV} for details. As for the action subgroup of the short exact sequence, $A_G$, this is just the set of restrictions to $F\unlhd G$, namely $\varphi_g \colon F\to F$, of the inner automorphisms of $G$, namely $\gamma_g \colon G\to G$, $x\mapsto g^{-1}xg$ (\,note that $\varphi_g$ is not, in general, inner as automorphism of $F$; we can only say that $A_G\leqslant \Aut(F)$\,). Finally, the notion of \emph{Orbit Decidability} is crucial: 

\noindent \textbf{\boldmath Orbit Decidability Problem, $OD(A)$:} For a finite presentation $G=\pres{X}{R}$ and a group of automorphisms $A\leqslant \Aut(G)$, \emph{given two words on the generators $u,v\in F(X)$, decide whether the corresponding elements $u,v\in G$ can be mapped to each other by some automorphism in $A$, i.e., whether there exists $\alpha \in A$ such that $u\alpha =v$.}

\begin{rem}
Sometimes the notion of Orbit Decidability for $A\leqslant \Aut(G)$ asks to decide whether there exists $\alpha \in A$ such that $u\alpha \sim v$. In general, this is not exactly the same, but the two notions obviously coincide when $\Inn(G)\leqslant A$, as is always the case for the action subgroups coming from short exact sequences. Therefore, Theorem~\ref{thm: TCP to CP} is perfectly valid with the two (coinciding) notions of orbit decidabillity. \end{rem}

A subgroup $A\leqslant \Aut(G)$ is said to be \emph{orbit decidable} if $OD(A)$ is solvable. Particularizing to the case where both $F$ and $H$ are free groups (hypotheses (i)-(ii)-(iii) do hold), the group $G$ siting in the middle of the short exact sequence is free-by-free, and Theorem~\ref{thm: TCP to CP} states that $CP(G)$ is solvable if and only if the corresponding action subgroup is orbit decidable: sometimes this is the case (for example, when $H=\ZZ$, in which case $A_G/(A_G\cap \Inn(F))$ is cyclic and $OD(A_G)$ is, precisely, Brinkmann's problem), and sometimes it is not (like in the classical Miller's examples of free-by-free groups with unsolvable Conjugacy Problem); see Bogopolski--Martino--Ventura~\cite{BMV} for more details. 

After the publication of~\cite{BMV}, several research papers have appeared following the same strategy in other situations: anytime we have a family of groups (to put in place of $F$) for which we can solve the Twisted Conjugacy Problem, it becomes interesting to study Orbit Decidability for (all, certain) subgroups of automorphisms $A\leqslant \Aut(F)$, and deduce from it the solvability or unsolvability of the Conjugacy Problem for extensions of $F$ by families of groups $H$ satisfying (ii) and (iii) (for example, torsion-free hyperbolic groups; see~\cite{BMV}). This strategy was followed in~\cite{BMV} itself to study the Conjugacy Problem within the family of free-by-free-abelian groups, where the authors positively solved it for all groups of the form $\ZZ^2 \rtimes F_m$, and $\ZZ^n \rtimes_A F_m$ with virtually solvable action subgroup $A\leqslant \GL_n(\ZZ)$; and provided the first known examples of groups of the form $\ZZ^4 \rtimes F_n$ with unsolvable Conjugacy Problem. Similarly, Sunic--Ventura \cite{SV} constructed the first known examples of automaton groups with unsolvable Conjugacy Problem. Gonz\'alez-Meneses--Ventura \cite{GM-V} followed the same project for Braid groups, showing that all Braid-by-hyperbolic groups have solvable Conjugacy Problem. Burillo--Matucci--Ventura \cite{BuMV} did the same for Thompson's group $F$, proving in this case the existence of Thomson-by-hyperbolic groups with unsolvable Conjugacy Problem. Recently, Blufstein--Valiunas~\cite{BV} and Crowe~\cite{Cr, Cr2} have followed similar projects for certain large-type Artin groups and for dihedral Artin groups, respectively. 

In the present paper we follow the above project for the family of solvable Baumslag--Solitar groups $BS(n,1)$: we first solve the Twisted Conjugacy Problem, and then investigate Orbit Decidability of certain subgroups of $\Aut(BS(n,1))$, connecting it with the Conjugacy Problem for some extensions of $BS(n,1)$.

At the beginning of Section~\ref{sec: BS overview} we briefly survey elementary properties and a normal form for the elements and the semi-direct product structure of $BS(n,1)$ (see~\ref{sec: semi direct}). Then we move on to the automorphism group $\Aut(BS(n,1))$: in~\ref{sec: autos}, following the results of Collins--Levin~\cite{CoLe} and O'Neil~\cite{N}, we give a more direct proof regarding the explicit form for the automorphisms of $BS(n,1)$. In~\ref{sec: CP} we give an explicit solution to the Conjugacy Problem for $BS(n,1)$. Then we focus on and solve the Twisted Conjugacy Problem (see Theorem~\ref{thm: main TCP}) in Section~\ref{sec: TCP}, and the orbit decidability (see Theorem~\ref{thm: main 1 OD}) for the full automorphism group $\Aut(BS(n,1))$ in Section~\ref{sec: OD}. We conclude the solvability of the Conjugacy Problem in the corresponding extension groups. 



\subsection*{General notation and conventions}

For a group $G$, $\Aut(G)$ (resp., $\End(G)$) denotes the group (resp., the monoid) of automorphisms (resp., endomorphisms) of $G$. We write them all with the argument on the left, that is, we denote by $(x)\varphi$ (or simply $x\varphi$) the image of the element $x$ by the homomorphism $\varphi$; accordingly, we denote by $\varphi \psi$ the composition $A \xrightarrow{\varphi} B \xrightarrow{\psi} C$. Specifically, we will reserve the letter $\gamma$ for right conjugations, $\gamma_g \colon G \to G, x\mapsto g^{-1}xg$. Following the same convention above, when thinking of a matrix $A$ as a map, it will always act on the right of horizontal vectors, $\textbf{v}\mapsto \textbf{v}A$. We denote by $\Mat_{n\times m}(\ZZ)$ the $n \times m$ (additive) group of matrices over $\ZZ$, and by $\GL_m(\ZZ)$ the linear group over the integers. We use the function $| \cdot |_a$ to count the number of $a$ occurrences in a word $w$.

\section{Solvable Baumslag--Solitar groups}\label{sec: BS overview}

The so-called Baumslag--Solitar groups $BS(n,m)$ are the class of two-generated one-related groups presented by 
 $$
BS(n,m) = \pres{a,t}{t^{-1}a^n t=a^m},
 $$
for $n, m\in \ZZ$. They all are HNN-extensions of $\ZZ$ (with associated subgroups $n\ZZ$ and $m\ZZ$) sitting in the middle of the well known short exact sequence
 \begin{equation}\label{ses}
\begin{array}{ccccccccc} 1 & \to\, & \ncl{a} & \, \to & BS(n,m) & \stackrel{\pi}{\to} & \ZZ =\gen{t} & \to & 1. \\ & & & & a & \mapsto & 1 & & \\ & & & & t & \mapsto & t & &
\end{array}
 \end{equation}
Note that the homomorphism $\pi\colon BS(n,m)\twoheadrightarrow \ZZ$ counts the total $t$-exponent in words $w=w(a,t)$, namely $w\mapsto t^{|w|_t}$, which is a well defined invariant because the defining relation is $t$-balanced (not being the case for $a$, in general). Of course, $\ker \pi =\ncl{a}\, =\{w(a,t) \mid |w|_t =0\}\unlhd BS(n,m)$.

In this paper we will concentrate in the solvable groups within this family, namely those with $m=1$, $BS(n,1)$. As the first elementary (and special) examples, we have $BS(0,1)\simeq \ZZ$, $BS(1,1)\simeq \ZZ^2$, or $BS(-1,1)=\pres{a,t}{t^{-1}a^{-1}t=a}$, which is the fundamental group of the Klein bottle. Note that $BS(n,1)^{\rm ab}=\ZZ \oplus \ZZ/|n-1|\ZZ$, with $\ncl{a}$ being, for $n\neq 1$, the only normal subgroup whose quotient is $\ZZ$; in particular, it is invariant under any automorphism, a crucial property for the arguments below.

\subsection{Normal Form}

Looking at the defining relation under the forms $a^n t=ta$ and $a^{-n}t=ta^{-1}$, one observes that, in an arbitrary word $w(a,t)$, positive occurrences of $t$ can always be moved to the right, at the price of multiplying by $n$ the exponents of the jumped $a$'s. Similarly, the defining relation written in the forms $t^{-1}a^n =at^{-1}$ and $t^{-1}a^{-n}=a^{-1}t^{-1}$ tells us that negative occurrences of $t$'s can always be moved to the left, at the price of multiplying by $n$ the exponents of the jumped $a$'s. Summarizing (and using induction), for every $k,p\in \ZZ$, $p\geq 0$, we have 
 \begin{equation}\label{eq: jumps}
t^p a^k =a^{kn^p}t^p \quad \text{ and } \quad a^k t^{-p}=t^{-p}a^{kn^p}.
 \end{equation}
Repeating sufficiently many of these jumps, any element $w(a,t)\in BS(n,1)$ can be expressed in the form $g=t^{-p}a^k t^q$, where $k,p,q\in \ZZ$, $p,q\geq 0$. This is a classical result which, from now on, for technical reasons, we prefer to write in the form $g=t^{-p}a^k t^p t^c$, where $p,k, c\in \ZZ$, $p\geq 0$. This is close to be a normal form, as will be proven below.

%

To understand the structure of $BS(n,1)$, we consider the following monomorphism, which proves linearity for this family of groups. 

\begin{lem}\label{linearity}
The map
 \begin{equation}\label{eq: BS into GL2}
\begin{array}{rcl} \varphi\colon BS(n,1) & \to & \GL_2(\ZZ[1/n])\leqslant \GL_2(\QQ) \\[3pt] a & \mapsto & A=\left(\begin{smallmatrix} 1 & 1 \\ 0 & 1 \end{smallmatrix}\right) \\[3pt] t & \mapsto & T=\left(\begin{smallmatrix} n & 0 \\ 0 & 1 \end{smallmatrix}\right) \end{array}
 \end{equation}
defines a monomorphism of groups. In particular, $BS(n,1)$ is a linear group. 
\end{lem}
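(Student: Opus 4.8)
The plan is to verify two things: that $\varphi$ is a well-defined group homomorphism, and that it is injective. Linearity of $BS(n,1)$ is then immediate, since $\GL_2(\ZZ[1/n]) \leqslant \GL_2(\QQ)$ and $\QQ$ is a field, so a faithful representation into $\GL_2(\ZZ[1/n])$ is in particular a faithful finite-dimensional representation over a field.

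For well-definedness I would use the one-relator presentation $BS(n,1) = \pres{a,t}{t^{-1}a^n t = a}$: a homomorphism out of $BS(n,1)$ is the same as a choice of images for $a$ and $t$ in a group satisfying the defining relation. So it suffices to check that $A$ and $T$ lie in $\GL_2(\ZZ[1/n])$ --- they do, since $\det A = 1$ and $\det T = n$ is a unit in $\ZZ[1/n]$ (assuming $n \neq 0$) --- and that they satisfy $T^{-1}A^n T = A$. This is a one-line computation: using $A^n = \left(\begin{smallmatrix} 1 & n \\ 0 & 1\end{smallmatrix}\right)$ and $T^{-1} = \left(\begin{smallmatrix} 1/n & 0 \\ 0 & 1\end{smallmatrix}\right)$, conjugating $A^n$ by $T$ rescales the off-diagonal entry by $1/n$, returning $A$ exactly.

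For injectivity I would feed in the (pseudo-)normal form already produced from the jump relations in~\eqref{eq: jumps}: every $g \in BS(n,1)$ can be written as $g = t^{-p}a^k t^p t^c$ with $p \geq 0$ and $k, c \in \ZZ$. Only existence of such an expression is needed here, not uniqueness, so no circularity with the normal-form claim arises. A direct multiplication of the corresponding matrices gives
\[
\varphi(t^{-p}a^k t^p t^c) = \begin{pmatrix} n^c & k/n^p \\ 0 & 1 \end{pmatrix},
\]
the key point being that conjugating $A^k$ by $T^{\pm p}$ only rescales the top-right entry. Now if $\varphi(g) = \id$, then reading off the entries forces $n^c = 1$ and $k/n^p = 0$, hence $k = 0$ and (provided $|n| \geq 2$) $c = 0$; therefore $g = 1$ and $\ker \varphi$ is trivial.

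The one place requiring care --- and the only real obstacle --- is the step $n^c = 1 \Rightarrow c = 0$, which is precisely the statement that $T$ has infinite order and holds exactly when $|n| \geq 2$. For $n \in \{-1,0,1\}$ the matrix $T$ has finite order (or is undefined), the displayed map degenerates, and these are exactly the special groups $BS(-1,1)$, $BS(0,1) \simeq \ZZ$ and $BS(1,1) \simeq \ZZ^2$ singled out above, whose linearity is clear by elementary means. A pleasant by-product of the computation is that $t^{-p}a^k t^p \mapsto k/n^p$ identifies $\ncl{a}$ with the additive group $\ZZ[1/n]$ and exhibits the semidirect-product structure $BS(n,1) \cong \ZZ[1/n] \rtimes \ZZ$ underlying~\eqref{ses}; indeed the injectivity argument can equivalently be phrased as injectivity of $\varphi$ on $\ncl{a}$ together with faithful detection of the $\ZZ$-coordinate by the top-left entry $n^c$.
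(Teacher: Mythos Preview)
Your proof is correct and follows essentially the same approach as the paper: verify the defining relation on the matrices $A,T$, then write an arbitrary element in the form $t^{-p}a^k t^q$ (equivalently $t^{-p}a^k t^p t^c$), compute its image as an upper-triangular matrix, and read off $k=0$, $q=p$ (equivalently $c=0$) from the identity. Your explicit treatment of the degenerate cases $n\in\{-1,0,1\}$ is a worthwhile addition that the paper leaves implicit.
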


\begin{proof}
It is straightforward to check that the map~\eqref{eq: BS into GL2} preserves the relation $t^{-1}a^n t=a$ and, hence, it is well defined:
 $$
T^{-1} A^n T=\begin{pmatrix} 1/n & 0 \\ 0 & 1 \end{pmatrix}
\begin{pmatrix} 1 & n \\ 0 & 1 \end{pmatrix} \begin{pmatrix} n & 0 \\ 0 & 1 \end{pmatrix}=\begin{pmatrix} 1 & 1 \\ 0 & 1 \end{pmatrix}=A.
 $$
Take an arbitrary element $g=t^{-p}a^kt^q\in BS(n,1)$. If $I=(t^{-p}a^kt^q)\varphi =T^{-p} A^k T^q$ then
 $$
\begin{pmatrix} 1 & 0 \\ 0 & 1 \end{pmatrix}= \begin{pmatrix} n^{-p} & 0 \\ 0 & 1 \end{pmatrix} \begin{pmatrix} 1 & k \\ 0 & 1 \end{pmatrix} \begin{pmatrix} n^q & 0 \\ 0 & 1 \end{pmatrix} =\begin{pmatrix} n^{q-p} & k\,n^{-p} \\ 0 & 1 \end{pmatrix}
 $$
and hence $k=0$ and $p=q$; therefore, $g=1$. This shows that $\varphi$ is injective.
\end{proof}

As a straightforward first corollary we get uniqueness, up to certain point, of the above expression for elements of $BS(n,1)$. 


\begin{cor}
Let $p_i, k_i, c_i\in \ZZ$ with $p_i\geq 0$, for $i=1,2$. In the group $BS(n,1)$, $t^{-p_1}a^{k_1} t^{p_1+c_1}=t^{-p_2}a^{k_2} t^{p_2+c_2}$ if and only if $c_1 =c_2 \in \ZZ$ and $k_1/n^{p_1}=k_2/n^{p_2}\in \ZZ[1/n]$.
\end{cor}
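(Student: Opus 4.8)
The plan is to transport the whole equality into $\GL_2(\ZZ[1/n])$ through the monomorphism $\varphi$ of Lemma~\ref{linearity}, where both sides become explicit upper-triangular matrices that can simply be read off and compared. Since $\varphi$ is injective, two elements of $BS(n,1)$ are equal if and only if their $\varphi$-images are equal, so the statement reduces to an elementary matrix identity.

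First I would record the image of a general element of the given shape. Taking the computation already made in the proof of Lemma~\ref{linearity} and substituting $q=p+c$, one obtains
$$
(t^{-p}a^kt^{p+c})\varphi = T^{-p}A^kT^{p+c} = \begin{pmatrix} n^c & k/n^p \\ 0 & 1 \end{pmatrix}.
$$
Applying $\varphi$ to both sides of the proposed equality and using its injectivity, the equation $t^{-p_1}a^{k_1}t^{p_1+c_1}=t^{-p_2}a^{k_2}t^{p_2+c_2}$ is equivalent to
$$
\begin{pmatrix} n^{c_1} & k_1/n^{p_1} \\ 0 & 1 \end{pmatrix} = \begin{pmatrix} n^{c_2} & k_2/n^{p_2} \\ 0 & 1 \end{pmatrix}.
$$

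Comparing the two nontrivial entries, this matrix identity holds exactly when $n^{c_1}=n^{c_2}$ and $k_1/n^{p_1}=k_2/n^{p_2}$, the second being precisely the asserted equality in $\ZZ[1/n]$. The only point requiring a word of care is the first condition: since the map $c\mapsto n^c$ is injective on $\ZZ$ for $|n|\geq 2$ (the same fact that forces $p=q$ in the proof of Lemma~\ref{linearity}), one has $n^{c_1}=n^{c_2}$ if and only if $c_1=c_2$, which settles both implications at once. I do not expect any genuine obstacle here: the argument is purely a translation through $\varphi$ followed by an entrywise comparison, with all the real work already contained in the injectivity of $\varphi$; the degenerate values $|n|\leq 1$, where $\varphi$ fails to be a monomorphism, concern abelian groups (and the Klein bottle group) and can be verified by hand if needed.
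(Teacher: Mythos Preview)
Your argument is correct. Both your approach and the paper's ultimately rest on the injectivity of $\varphi$ from Lemma~\ref{linearity}, but you deploy it differently. The paper first reads off $c_1=c_2$ from the $t$-exponent homomorphism $\pi$ of~\eqref{ses} (which is independent of $\varphi$ and works for every $n$), then cancels the $t$-tails, uses the jump rules~\eqref{eq: jumps} inside the group to reduce to $a^{k_1 n^{p_2}-k_2 n^{p_1}}=1$, and only then invokes linearity to conclude that $a$ has infinite order. You instead push the entire element through $\varphi$ in one step and compare matrix entries. Your route is shorter and more uniform; the paper's route has the small advantage that the equality $c_1=c_2$ comes from $\pi$ without needing $|n|\geq 2$, whereas you extract it from $n^{c_1}=n^{c_2}$ and have to set aside the degenerate cases (which you correctly flag).
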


\begin{proof}
Applying $\pi$ from~\eqref{ses}, it is clear that $c_1 =c_2$ is a necessary condition. Then, the assumed equality can be expressed as 
 $$
1=t^{-p_1}a^{k_1} t^{p_1-p_2} a^{-k_2} t^{p_2} = t^{-p_1-p_2}a^{k_1 n^{p_2}} a^{-k_2n^{p_1}} t^{p_1+p_2},
 $$
which is equivalent to $a^{k_1 n^{p_2} -k_2n^{p_1}}=1$ and so, to $k_1 n^{p_2} -k_2n^{p_1}=0\in \ZZ$, and to $k_1/n^{p_1}=k_2/n^{p_2}\in \ZZ[1/n]$ (since, by Lemma~\ref{linearity}, $a$ has infinite order in $BS(n,1)$).
\end{proof}

This suggests to use the following notation: for $\alpha =k/n^p\in \ZZ[1/n]$ (here, $k,p\in \ZZ$, $p\geq 0$) write 
 $$
a^{\alpha}:=t^{-p}a^k t^p.
 $$
Note that this is coherent because the equality among rational numbers $kn/n^{p+1} =k/n^p$ corresponds to the equality $t^{-p-1}a^{kn} t^{p+1}=t^{-p} (t^{-1}a^n t)^k t^p=t^{-p}a^k t^p$ among the corresponding elements in $BS(n,1)$. This is specific for powers of $a$ so, we only accept rational exponents for the letter $a$. 

Moreover, the two rules~\eqref{eq: jumps} stated above and saying that $t$ jumps to the right (and $t^{-1}$ to the left) of any integral power of $a$ at the price of multiplying its exponent by $n$, 
can be unified and extended to the following more homogeneous rule, saying that $t^c$, for $c\in \ZZ$ (with no signum distinction !), jumps to the right of rational powers of $a$ at the price of multiplying its exponent by $n^c$. Furthermore, this new exponential notation is compatible with the standard rules of computation. 

\begin{lem}\label{lem: calculs}
For any $\alpha =k/n^p,\, \beta=\ell/n^q\in \ZZ[1/n]$ ($k,\ell, p,q\in \ZZ$, $p,q\geq 0$) and $c, r\in \ZZ$, we have 
\begin{itemize}
\item[(i)] $t^c a^{\alpha} =a^{n^c \alpha} t^c$ (equivalently, $a^{\alpha} t^c= t^c a^{\alpha n^{-c}}$);
\item[(ii)] $a^{\alpha}a^{\beta}=a^{\alpha+\beta}$;
\item[(iii)] $(a^{\alpha})^r =a^{r\alpha}$;
\item[(iv)] $t^{-c}a^{\alpha}t^c=a^{\alpha/n^c}$;
\item[(v)] $\big( a^{\alpha}t^c \big)^r = a^{\frac{n^{rc}-1}{n^c-1}\alpha} t^{rc}$.
\end{itemize}
\end{lem}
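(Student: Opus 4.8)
The plan is to establish the five identities in the order (i), (iv), (ii), (iii), (v), as each one feeds into the next; throughout, the only tools are the jump rules~\eqref{eq: jumps}, the definition $a^{\alpha}=t^{-p}a^{k}t^{p}$ for $\alpha=k/n^{p}$, and its coherence, which lets me rewrite $\alpha=k/n^{p}=kn^{m}/n^{p+m}$ and thereby enlarge the denominator exponent $p$ whenever convenient.

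The heart of the lemma, and the step I expect to be the main obstacle, is the commutation rule (i), because~\eqref{eq: jumps} has a built-in sign asymmetry (positive $t$'s move right, negative $t$'s move left) while (i) must hold uniformly for every $c\in\ZZ$ and for fractional, not just integral, powers of $a$. The device that removes the asymmetry is precisely the freedom to enlarge $p$: given $c$, I first choose a representative $\alpha=k/n^{p}$ with $p\geq c$ (automatic when $c\leq0$, and arranged by coherence when $c>0$), so that $p-c\geq0$. Then a single telescoping computation,
\[
t^{c}a^{\alpha}=t^{c}\,t^{-p}a^{k}t^{p}=t^{-(p-c)}a^{k}t^{p-c}\,t^{c}=a^{k/n^{p-c}}\,t^{c}=a^{n^{c}\alpha}\,t^{c},
\]
covers both signs of $c$ at once, using $k/n^{p-c}=kn^{c}/n^{p}=n^{c}\alpha$ at the last step. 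Identity (iv) then drops out immediately by applying (i) with exponent $-c$: $t^{-c}a^{\alpha}t^{c}=a^{n^{-c}\alpha}\,t^{-c}t^{c}=a^{\alpha/n^{c}}$.

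Identities (ii) and (iii) are then short and formal. For (ii) I put $\alpha$ and $\beta$ over a common denominator $N=\max\{p,q\}$, writing $\alpha=k'/n^{N}$, $\beta=\ell'/n^{N}$, and collapse the inner $t^{N}t^{-N}$: $a^{\alpha}a^{\beta}=t^{-N}a^{k'}t^{N}t^{-N}a^{\ell'}t^{N}=t^{-N}a^{k'+\ell'}t^{N}=a^{\alpha+\beta}$. For (iii) the same inner cancellation telescopes an $r$-fold product, $(a^{\alpha})^{r}=(t^{-p}a^{k}t^{p})^{r}=t^{-p}a^{kr}t^{p}=a^{r\alpha}$, valid for all $r\in\ZZ$ (the case $r<0$ being covered by $(a^{\alpha})^{-1}=a^{-\alpha}$).

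Finally, (v) follows by induction on $r\geq0$. Reading the exponent as the geometric sum $\tfrac{n^{rc}-1}{n^{c}-1}=\sum_{i=0}^{r-1}n^{ic}$ (with the degenerate case $c=0$ reducing directly to (iii)), the base case $r=0$ is trivial, and the inductive step is
\[
(a^{\alpha}t^{c})^{r+1}=a^{\alpha\sum_{i=0}^{r-1}n^{ic}}\,t^{rc}\,a^{\alpha}\,t^{c}=a^{\alpha\sum_{i=0}^{r-1}n^{ic}}\,a^{n^{rc}\alpha}\,t^{(r+1)c}=a^{\alpha\sum_{i=0}^{r}n^{ic}}\,t^{(r+1)c},
\]
where I commute $t^{rc}$ past $a^{\alpha}$ using (i) and merge the two powers of $a$ using (ii). The extension to negative $r$ is obtained by inverting: from $(a^{\alpha}t^{c})^{s}=a^{c_{s}\alpha}t^{sc}$ with $c_{s}=\tfrac{n^{sc}-1}{n^{c}-1}$, applying (i) to $(a^{c_{s}\alpha}t^{sc})^{-1}=t^{-sc}a^{-c_{s}\alpha}=a^{-n^{-sc}c_{s}\alpha}t^{-sc}$ gives exponent $-n^{-sc}c_{s}=\tfrac{n^{-sc}-1}{n^{c}-1}$, which is exactly the closed form with $r=-s$, completing the proof.
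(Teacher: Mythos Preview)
Your proof is correct and follows essentially the same route as the paper's: both derive (i) from the jump rules~\eqref{eq: jumps} and the definition of $a^{\alpha}$, then deduce (iv) from (i), handle (ii)--(iii) by collapsing the inner $t$-powers, and prove (v) by induction on $r\geq 0$ followed by inversion for negative $r$. The only cosmetic differences are that you unify the two sign cases in (i) by first enlarging $p$ (whereas the paper splits into $c\geq 0$ and $c\leq 0$), you use a common denominator in (ii) rather than invoking (i), and you explicitly flag the degenerate case $c=0$ in (v)---all of which are equally valid.
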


\begin{proof}
For $c\geq 0$ we have $t^c a^{\alpha} =t^c t^{-p}a^k t^{p}= t^{-p}a^{kn^c} t^{c}t^{p}=a^{n^c \alpha} t^c$. And, for $c\leq 0$, we have $t^c a^{\alpha} =t^c t^{-p}a^k t^{p}=t^{c-p}a^k t^{p-c}t^c= a^{k/n^{p-c}} t^{c} =a^{n^c \alpha} t^c$. In a similar fashion, we get the equivalent version $a^{\alpha} t^c= t^c a^{n^{-c} \alpha}$. This proves~(i).

For~(ii), $a^{\alpha}a^{\beta}=t^{-p}a^k t^{p-q} a^{\ell} t^q =t^{-p}a^ka^{\ell n^{p-q}} t^p =a^{(k+\ell n^{p-q})/n^p}=a^{\alpha+\beta}$, where the second equality uses~(i).

For~(iii), $(a^{\alpha})^r =\big(t^{-p}a^k t^p\big)^r =t^{-p} a^{rk} t^p=a^{rk/n^p}=a^{r\alpha}$.

For~(iv), $t^{-c}a^{\alpha}t^c=a^{n^{-c}\alpha}t^{-c}t^c =a^{\alpha/n^c}$, using~(i).

Finally, (v) is obvious for $r=0,1$. For $r\geq 2$ we use induction on $r$: 
 $$
\big( a^{\alpha} t^c \big)^{r+1}= a^{\alpha}t^c a^{\frac{n^{rc}-1}{n^c-1}\alpha} t^{rc} = a^{\alpha +n^c \frac{n^{rc}-1}{n^c-1}\alpha} t^{(r+1)c} =a^{(1+n^c \frac{n^{rc}-1}{n^c-1})\alpha} t^{(r+1)c}=
 $$
 $$
=a^{\frac{n^c-1+n^{(r+1)c}-n^c}{n^c-1}\alpha} t^{(r+1)c} =a^{\frac{n^{(r+1)c}-1}{n^c-1}\alpha} t^{(r+1)c},
$$
again using~(i). And, for $r\leq 0$, we apply the formula to $-r\geq 0$ and get  
 $$
\big( a^{\alpha} t^c \big)^r =\big( \big( a^{\alpha} t^c \big)^{-1} \big)^{-r} =\big( a^{-\alpha n^{-c}} t^{-c} \big)^{-r} =a^{\frac{n^{rc}-1}{n^{-c}-1}(-\alpha n^{-c})} t^{rc} =a^{-\frac{n^{rc}-1}{1-n^c}\alpha} t^{rc} =a^{\frac{n^{rc}-1}{n^c-1}\alpha} t^{rc}.
 $$
This completes the proof.
\end{proof}

Using this notation and the above arguments, we can conclude the following normal form for elements in $BS(n,1)$:

\begin{prop}
Every element $g\in BS(n,1)$ can be written, in a unique way, as $g=a^{\alpha}t^c$, for some $\alpha\in \ZZ[1/n]$ and some $c\in \ZZ$.\qed
\end{prop}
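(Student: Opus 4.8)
The plan is to separate the two assertions---existence and uniqueness of the expression $g=a^{\alpha}t^c$---and to reduce each of them to facts already established in the excerpt. Neither requires new ideas; the proposition is essentially a repackaging of the classical normal form together with the Corollary.

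For \textbf{existence}, I would invoke the classical normal form recalled just before Lemma~\ref{linearity}: by repeatedly applying the jump rules~\eqref{eq: jumps}, every word $w(a,t)$ can be brought to the form $g=t^{-p}a^kt^q$ with $p,q\in\ZZ$, $p,q\geq 0$. Writing $q=p+c$ with $c=q-p\in\ZZ$ and setting $\alpha=k/n^p\in\ZZ[1/n]$, the very definition $a^{\alpha}=t^{-p}a^kt^p$ gives $g=t^{-p}a^kt^p\,t^c=a^{\alpha}t^c$, as required. Alternatively, and more in the spirit of Lemma~\ref{lem: calculs}, one can check directly that the set $S=\{a^{\alpha}t^c : \alpha\in\ZZ[1/n],\ c\in\ZZ\}$ is a subgroup of $BS(n,1)$: by parts~(i) and~(ii) of Lemma~\ref{lem: calculs} one has $(a^{\alpha}t^c)(a^{\beta}t^d)=a^{\alpha+n^c\beta}t^{c+d}\in S$, while $(a^{\alpha}t^c)^{-1}=t^{-c}a^{-\alpha}=a^{-\alpha/n^c}t^{-c}\in S$; since $S$ contains $a=a^1t^0$ and $t=a^0t^1$, it must be all of $BS(n,1)$.

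For \textbf{uniqueness}, suppose $a^{\alpha_1}t^{c_1}=a^{\alpha_2}t^{c_2}$ and write $\alpha_i=k_i/n^{p_i}$ with $p_i\geq 0$. Unfolding the notation, this is exactly the equality $t^{-p_1}a^{k_1}t^{p_1+c_1}=t^{-p_2}a^{k_2}t^{p_2+c_2}$, which is the hypothesis of the Corollary above. That corollary yields at once $c_1=c_2$ and $k_1/n^{p_1}=k_2/n^{p_2}$, i.e.\ $\alpha_1=\alpha_2$ in $\ZZ[1/n]$; hence the pair $(\alpha,c)$ representing $g$ is unique.

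As for the \emph{main obstacle}: there is none of substance, since all the real work has already been done---the linearity monomorphism of Lemma~\ref{linearity} is what powers the uniqueness half through the Corollary, and the jump rules supply existence. The only points needing a little care are (a) confirming that letting $\alpha$ range over all of $\ZZ[1/n]$, rather than fixing a representative $k/n^p$, introduces no ambiguity---this is precisely the coherence check $kn/n^{p+1}=k/n^p \leftrightarrow t^{-p-1}a^{kn}t^{p+1}=t^{-p}a^kt^p$ noted when the notation $a^{\alpha}$ was defined; and (b) observing that the sign convention $p,q\geq 0$ in the classical normal form is harmless, since $c=q-p$ is allowed to be any integer. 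This is why the statement can reasonably be closed with \qed rather than a written-out argument.
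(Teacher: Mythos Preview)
Your proposal is correct and matches the paper's approach exactly: the proposition is stated with a bare \qed\ precisely because it is meant to be read as an immediate consequence of the preceding material---the jump rules for existence and the Corollary (powered by Lemma~\ref{linearity}) for uniqueness---and you have unpacked that reduction accurately. Your alternative subgroup argument for existence via Lemma~\ref{lem: calculs} is a pleasant bonus but not needed.
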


Another straightforward consequence of Lemma~\ref{linearity} is the structure of $\ker \pi=\ncl{a}$ of~\eqref{ses}. 

\begin{cor}\label{cor: kernel}
We have $\ker \pi =\ncl{a} \,\simeq \ZZ[1/n]$, an additive subgroup of $\QQ$. In particular, it is abelian and not finitely generated. 
\end{cor}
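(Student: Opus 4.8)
The plan is to identify $\ker\pi$ explicitly using the normal form, and then recognize the resulting group as the additive group $(\ZZ[1/n],+)$. First I would recall that, by the normal form proposition, every $g\in BS(n,1)$ can be written uniquely as $g=a^{\alpha}t^c$ with $\alpha\in\ZZ[1/n]$ and $c\in\ZZ$. Since $\pi$ counts the total $t$-exponent and the letter $a$ contributes nothing to it, we have $\pi(a^{\alpha}t^c)=t^c$; hence $g\in\ker\pi$ exactly when $c=0$. This yields the set-theoretic description $\ker\pi=\{a^{\alpha}\mid\alpha\in\ZZ[1/n]\}$.

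Next I would produce the isomorphism. Consider the map $\Phi\colon(\ZZ[1/n],+)\to\ker\pi$, $\alpha\mapsto a^{\alpha}$; it is well defined precisely because the exponential notation $a^{\alpha}$ was shown to be coherent (independent of the chosen representation $\alpha=k/n^p$). By Lemma~\ref{lem: calculs}(ii), $a^{\alpha}a^{\beta}=a^{\alpha+\beta}$, so $\Phi$ is a group homomorphism. It is surjective by the description of $\ker\pi$ above, and injective by the uniqueness clause of the normal form (equivalently, $a^{\alpha}=1=a^0$ forces $\alpha=0$, since $a$ has infinite order by Lemma~\ref{linearity}). Thus $\Phi$ is an isomorphism and $\ker\pi\simeq\ZZ[1/n]$, an additive subgroup of $\QQ$, which is in particular abelian.

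Finally, for the non-finite-generation (in the relevant case $|n|\geq 2$), I would argue by bounding denominators: if $\ZZ[1/n]$ were generated by finitely many elements $k_1/n^{p_1},\ldots,k_m/n^{p_m}$, then setting $P=\max_i p_i$, every integer combination of them lies in $\tfrac{1}{n^P}\ZZ$, whereas $1/n^{P+1}\in\ZZ[1/n]$ does not, a contradiction. The whole argument is short and essentially mechanical once the normal form is in hand; the only point requiring a moment's care is the well-definedness of $\Phi$, which is already secured by the coherence of the $a^{\alpha}$ notation, so I do not anticipate a genuine obstacle.
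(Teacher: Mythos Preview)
Your proof is correct and follows essentially the same route as the paper: identify $\ker\pi=\{a^{\alpha}\mid\alpha\in\ZZ[1/n]\}$ via the normal form and $\pi(a^{\alpha}t^c)=t^c$, then use Lemma~\ref{lem: calculs}(ii) to obtain the isomorphism with $(\ZZ[1/n],+)$. You are somewhat more explicit (constructing $\Phi$ and verifying injectivity/surjectivity) and you additionally supply the bounded-denominator argument for non-finite-generation, which the paper asserts but does not spell out.
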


\begin{proof}
Clearly, 
 $$
\ker \pi =\ncl{a} =\{a^{\alpha}t^c\in BS(n,1) \mid 1=\big(a^{\alpha}t^c\big)\pi =t^c\}=\{a^{\alpha} \mid \alpha\in \ZZ[1/n]\}.
 $$
Since, by Lemma~\ref{lem: calculs}(ii),  $a^{\alpha}a^{\beta}=a^{\alpha+\beta}$, we conclude that $\ker \pi =\ncl{a} \,\simeq \ZZ[1/n]$, an additive subgroup of $\QQ$; in particular, it is abelian. Alternatively, this can also be seen by observing that the image of $\ker \pi =\ncl{a}$ under the monomorphism $\varphi$ of~\eqref{eq: BS into GL2} is $\langle \big( \begin{smallmatrix} 1 & 1/n^p \\ 0 & 1 \end{smallmatrix} \big), \, p\geq 0\rangle \leqslant \GL_2(\ZZ[1/n])$.
%
\end{proof}


\subsection{The semidirect product structure for $BS(n,1)$}\label{sec: semi direct}

According to Corollary~\ref{cor: kernel}, the short exact sequence~\eqref{ses} for $m=1$ has the form 
 $$
1\to \mathbb{Z}[1/n] \to BS(n,1) \to \mathbb{Z} \to 1,
 $$
and it splits because $\ZZ$ is free. We conclude that $BS(n, 1)$ is a semidirect product $BS(n, 1) \simeq \mathbb{Z}[1/n] \rtimes \mathbb{Z}$, where $\ZZ=\gen{t}$ acts on $\ZZ[1/n]$ by multiplying by $n$. More explicitly, 
 $$
BS(n,1)=\{ a^{\alpha}t^c \mid \alpha\in \ZZ[1/n],\, c\in \ZZ\},
 $$
with the group operation given by 
 \begin{align}\label{eq: product}
\big( a^{\alpha_1}t^{c_1} \big)\big( a^{\alpha_2}t^{c_2} \big) &= a^{\alpha_1}t^{c_1} a^{\alpha_2}t^{c_2} \notag \\ &= a^{\alpha_1} a^{n^{c_1}\alpha_2}t^{c_1+c_2} \\ &= a^{\alpha_1 +n^{c_1}\alpha_2} t^{c_1+c_2}, \notag
 \end{align}
and the inversion by 
 \begin{equation}\label{eq: inverse}
\big( a^{\alpha}t^c \big)^{-1} =t^{-c}a^{-\alpha}=a^{-n^{-c}\alpha}t^{-c}.
 \end{equation}



\subsection{Automorphisms of $BS(n,1)$}\label{sec: autos}

Following a previous work by Collins--Levin~\cite{CoLe}, J. O'Neill~\cite{N} gave an explicit description of the automorphisms of $BS(n,1)$. For completeness, we give here a more direct and compact proof. 

\begin{prop}[J. O'Neill~\cite{N}]\label{prop: O'Neil}
The automorphism group of $BS(n,1)$ is 
 $$
\Aut(BS(n,1))=\{\varphi_{\alpha, \beta} \mid \alpha\in \ZZ[1/n]^*,\, \beta \in \ZZ[1/n]\}
 $$
where, for $\alpha=k/n^p$ and $\beta=\ell/n^q$ with $k,\ell, p,q\in \ZZ$, $p,q\geq 0$, $\varphi_{\alpha, \beta}$ is defined as
 $$
\begin{array}{rcl} \varphi_{\alpha, \beta}\colon BS(n,1) & \to & BS(n,1) \\ a & \mapsto & a^{\alpha}=t^{-p} a^k t^p \\ t & \mapsto & a^{\beta}t=t^{-q}a^{\ell} t^{q+1}. 
 \end{array}
 $$
Moreover, $\varphi_{\alpha_1, \beta_1} \circ \varphi_{\alpha_2, \beta_2} =\varphi_{\alpha_1\alpha_2, \beta_1\alpha_2+\beta_2}$.
\end{prop}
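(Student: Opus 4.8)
The plan is to prove the two inclusions and the composition law separately. For the easy inclusion $\supseteq$ I would first check that each $\varphi_{\alpha,\beta}$ respects the defining relation, so that it really extends to an endomorphism: using Lemma~\ref{lem: calculs} one computes $(t^{-1}a^nt)\varphi_{\alpha,\beta}=(a^\beta t)^{-1}(a^\alpha)^n(a^\beta t)=t^{-1}a^{n\alpha}t=a^{\alpha}=(a)\varphi_{\alpha,\beta}$, the $a^{\pm\beta}$ cancelling because $\ncl{a}$ is abelian and the last equality being Lemma~\ref{lem: calculs}(iv). The composition formula $\varphi_{\alpha_1,\beta_1}\circ\varphi_{\alpha_2,\beta_2}=\varphi_{\alpha_1\alpha_2,\beta_1\alpha_2+\beta_2}$ I would then obtain by evaluating both sides on the generators $a$ and $t$; the only mildly delicate point is how $\varphi_{\alpha,\beta}$ acts on a general power $a^\gamma$, for which I would record the clean fact that $\varphi_{\alpha,\beta}$ restricts on $\ncl{a}\simeq\ZZ[1/n]$ (Corollary~\ref{cor: kernel}) to multiplication by $\alpha$, i.e.\ $(a^\gamma)\varphi_{\alpha,\beta}=a^{\gamma\alpha}$ for all $\gamma\in\ZZ[1/n]$ (an easy induction on the denominator via Lemma~\ref{lem: calculs}(iv)). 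Granting this formula, $\varphi_{1,0}=\id$ and $\varphi_{\alpha,\beta}$ has two-sided inverse $\varphi_{\alpha^{-1},-\beta\alpha^{-1}}$, where $\alpha^{-1}\in\ZZ[1/n]$ exactly because $\alpha\in\ZZ[1/n]^*$; hence every $\varphi_{\alpha,\beta}$ is an automorphism and the displayed set is a subgroup of $\Aut(BS(n,1))$.

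For the reverse inclusion $\subseteq$, let $\psi\in\Aut(BS(n,1))$. The crucial structural input is that $\ncl{a}=\ker\pi$ is \emph{characteristic} (for $n\neq 1$ it is the unique normal subgroup with quotient $\ZZ$, as noted after~\eqref{ses}); thus $\psi(\ncl{a})=\ncl{a}$, so $\psi$ restricts to an automorphism of $\ncl{a}\simeq\ZZ[1/n]$ and descends to an automorphism $\bar\psi$ of the quotient $\ZZ=\gen{t}$. Next I would compute $\Aut(\ZZ[1/n])$: any additive endomorphism $f$ of $\ZZ[1/n]$ is multiplication by $\mu:=f(1)$, since $n^pf(1/n^p)=f(1)$ forces $f(k/n^p)=(k/n^p)\mu$; hence $\End(\ZZ[1/n])\simeq\ZZ[1/n]$ and $\Aut(\ZZ[1/n])=\ZZ[1/n]^*$. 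Therefore $\psi|_{\ncl{a}}$ is multiplication by a unit $\alpha\in\ZZ[1/n]^*$, i.e.\ $a\psi=a^\alpha$. On the other side, $\bar\psi\in\Aut(\ZZ)=\{\pm\id\}$ gives $\pi(t\psi)=\pm1$, so by the normal form $t\psi=a^\beta t^{\varepsilon}$ for some $\beta\in\ZZ[1/n]$ and $\varepsilon=\pm1$.

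It remains to rule out $\varepsilon=-1$, and this is the step I expect to carry the genuine content. Conjugation by $t$ acts on $\ncl{a}\simeq\ZZ[1/n]$ as multiplication by $1/n$ (Lemma~\ref{lem: calculs}(iv): $t^{-1}a^\gamma t=a^{\gamma/n}$), and $\psi$ must intertwine this action with its image. Applying $\psi$ to $t^{-1}a^\gamma t=a^{\gamma/n}$ yields, on the one hand, $\psi(a^{\gamma/n})=a^{\alpha\gamma/n}$ and, on the other hand, $(a^\beta t^\varepsilon)^{-1}a^{\alpha\gamma}(a^\beta t^\varepsilon)=a^{\alpha\gamma/n^\varepsilon}$ (the $a^{\pm\beta}$ again cancelling). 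Equality for all $\gamma$, with $\alpha\neq 0$, forces $n^\varepsilon=n$, hence $\varepsilon=1$ whenever $|n|\geq 2$; so $t\psi=a^\beta t$ and $\psi=\varphi_{\alpha,\beta}$, as claimed. The main obstacle is precisely this last compatibility argument, since it is what invokes the specific semidirect-product dynamics of $BS(n,1)$ and what breaks down in the degenerate cases $n\in\{0,\pm1\}$ (where either $n^{-1}=n$ or $\ncl{a}$ fails to be characteristic, consistent with $BS(1,1)\simeq\ZZ^2$ and $BS(-1,1)$ admitting extra automorphisms with $t\mapsto a^\beta t^{-1}$); accordingly I would state the proposition under the standing assumption $|n|\geq 2$.
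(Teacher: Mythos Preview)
Your proof is correct and follows the same overall outline as the paper's: check the relation, derive the composition law on generators, and use characteristicity of $\ncl{a}$ to force the shape of an arbitrary automorphism. Two organizational differences are worth noting. First, you record the clean fact $(a^{\gamma})\varphi_{\alpha,\beta}=a^{\gamma\alpha}$ and use it to get the composition law in one line; the paper instead expands $a=t^{-p_1}a^{k_1}t^{p_1}$ and $t=t^{-q_1}a^{\ell_1}t^{q_1+1}$ and pushes through an explicit computation using Lemma~\ref{lem: calculs}(v), then repackages the result via the matrix representation $\left(\begin{smallmatrix}\alpha & 0\\ \beta & 1\end{smallmatrix}\right)$ to read off invertibility. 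Second, and more substantively, you explicitly rule out $t\psi=a^{\beta}t^{-1}$ by the conjugation-compatibility argument ($n^{\varepsilon}=n$ forces $\varepsilon=1$ for $|n|\geq 2$), whereas the paper simply asserts ``$t$ to an element with $|t\varphi|_t=1$'' without isolating this step or flagging the restriction on $n$; your version is the more complete one here.
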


\begin{proof}

It is clear that any automorphism $\varphi\in \Aut(BS(n,1))$ must be of the above form: it must leave $\ncl{a}$ invariant (because it is the only normal subgroup in $BS(n,1)$ with quotient $\ZZ$) and so, it must send $a$ to an element with $|a\varphi|_t=0$, and $t$ to an element with $|t\varphi|_t=1$; the necessity of the invertibility condition for $\alpha=k/n^p\in \ZZ[1/n]^*$ (just meaning that $k$ divides some power of $n$) will be seen later. 

Let us see that every such $\varphi_{\alpha, \beta}$ is, in fact, an endomorphism. It is straightforward to check it is well defined, by proving the preservation of the defining relation:
 \begin{align*}
t^{-1}a^nt \stackrel{\varphi_{\alpha,\beta}}{\quad \mapsto\quad} &\big( a^{\beta} t\big)^{-1} \big( a^{\alpha}\big)^n \big( a^{\beta} t\big)= t^{-1} a^{-\beta} a^{n\alpha} a^{\beta}t= t^{-1} a^{n\alpha} t =a^{\alpha},
\\ a \stackrel{\varphi_{\alpha,\beta}}{\quad \mapsto\quad} & a^{\alpha}. 
 \end{align*}

Now let us check that the composition $\varphi_{\alpha_1, \beta_1} \circ \varphi_{\alpha_2, \beta_2}$ equals $\varphi_{\alpha_1\alpha_2, \beta_1\alpha_2+\beta_2}$. In fact, for $\alpha_1=k_1/n^{p_1}$, $\beta_1=\ell_1/n^{q_1}$ and $\alpha_2=k_2/n^{p_2}$, $\beta_2=\ell_2/n^{q_2}$, we have
 \begin{align*}
a \stackrel{\varphi_{\alpha_1, \beta_1}}{\qquad\mapsto\qquad} t^{-p_1} a^{k_1} t^{p_1} \stackrel{\varphi_{\alpha_2, \beta_2}}{\qquad\mapsto\qquad} & \big( a^{\beta_2} t\big)^{-p_1} \big( a^{\alpha_2}\big)^{k_1} \big( a^{\beta_2} t\big)^{p_1}= \\ &= a^{\frac{n^{-p_1}-1}{n-1}\beta_2} t^{-p_1} a^{k_1\alpha_2} a^{\frac{n^{p_1}-1}{n-1}\beta_2} t^{p_1}= \\ &= a^{\frac{n^{-p_1}-1}{n-1}\beta_2} a^{n^{-p_1} k_1\alpha_2} a^{n^{-p_1}\frac{n^{p_1}-1}{n-1}\beta_2}= \\ &= a^{\frac{n^{-p_1}-1}{n-1}\beta_2+\alpha_1\alpha_2+\frac{1-n^{-p_1}}{n-1}\beta_2}= \\ &= a^{\alpha_1\alpha_2},
 \end{align*}
and
\begin{align*}
t \stackrel{\varphi_{\alpha_1, \beta_1}}{\qquad\mapsto\qquad} t^{-q_1} a^{\ell_1} t^{q_1+1} \stackrel{\varphi_{\alpha_2, \beta_2}}{\qquad\mapsto\qquad} & \big( a^{\beta_2} t\big)^{-q_1} \big( a^{\alpha_2} \big)^{\ell_1} \big( a^{\beta_2} t\big)^{q_1+1}= \\ &= a^{\frac{n^{-q_1}-1}{n-1}\beta_2} t^{-q_1} a^{\ell_1\alpha_2} a^{\frac{n^{q_1+1}-1}{n-1}\beta_2} t^{q_1+1}= \\ &= a^{\frac{n^{-q_1}-1}{n-1}\beta_2} a^{n^{-q_1}\ell_1\alpha_2} a^{n^{-q_1}\frac{n^{q_1+1}-1}{n-1}\beta_2} t= \\ &= a^{\frac{n^{-q_1}-1}{n-1}\beta_2+\beta_1\alpha_2+\frac{n-n^{-q_1}}{n-1}\beta_2} t= 
\\ &= a^{\beta_1\alpha_2+\beta_2} t.
 \end{align*}
Therefore, $\varphi_{\alpha_1,\beta_1} \circ \varphi_{\alpha_2,\beta_2} =\varphi_{\alpha_1\alpha_2, \beta_1\alpha_2+\beta_2}$, as we wanted to see. Note that this can be alternatively expressed by saying that $\varphi_{\alpha_1, \beta_1}\circ \varphi_{\alpha_2, \beta_2} =\varphi_{\alpha, \beta}$, where 
 $$
\left( \begin{array}{cc} \alpha & 0 \\ \beta & 1 \end{array}\right) = \left( \begin{array}{cc} \alpha_1 & 0 \\ \beta_1 & 1 \end{array}\right)\left( \begin{array}{cc} \alpha_2 & 0 \\ \beta_2 & 1 \end{array}\right)
 $$
in $\Mat_2(\ZZ[1/n])$. In particular, such an endomorphism $\varphi_{\alpha, \beta}$ is an automorphism if and only if $\left( \begin{smallmatrix} \alpha & 0 \\ \beta & 1 \end{smallmatrix} \right)\in \GL_2(\ZZ[1/n])$, which happens if and only if $\alpha=k/n^p$ is invertible in $\ZZ[1/n]$, i.e., if $k$ divides some power of $n$. This completes the proof. 
\end{proof}
In other words, we have proven that
\begin{equation}\label{eq: aut semi direct}
\Aut(BS(n,1))\simeq \left\{\left( \left.\begin{array}{cc} \alpha & 0 \\ \beta & 1 \end{array} \right) \,\, \right| \,\, \alpha\in \ZZ[1/n]^*,\, \beta\in \ZZ[1/n]\right\}\leqslant \GL_2(\ZZ[1/n]),
\end{equation}
with $\left( \begin{smallmatrix} \alpha & 0 \\ \beta & 1 \end{smallmatrix} \right)$ acting as $\varphi_{\alpha, \beta}$, namely, mapping any $a^{\nu}t^c\in BS(n,1)$ (with $\nu=m/n^r\in \ZZ[1/n]$, $m,r,c\in \ZZ$, $r\geq 0$) to 
 \begin{align}\label{eq: action}
\big( a^{\nu}t^c \big) \varphi_{\alpha,\beta} = \big( t^{-r}a^{m}t^{r+c}\big) \varphi_{\alpha, \beta} &= \big( a^{\beta}t\big)^{-r} a^{m\alpha} \big(a^{\beta}t\big)^{r+c}= \notag \\ &= a^{\frac{n^{-r}-1}{n-1}\beta}t^{-r} a^{m\alpha} a^{\frac{n^{r+c}-1}{n-1}\beta}t^{r+c}= \notag \\ &= a^{\frac{n^{-r}-1}{n-1}\beta} a^{m n^{-r}\alpha} a^{n^{-r}\frac{n^{r+c}-1}{n-1}\beta}t^{c}= \\ &= a^{\frac{n^{-r}-1}{n-1}\beta+ m n^{-r}\alpha + \frac{n^{c}-n^{-r}}{n-1}\beta}t^{c}= \notag \\ &= a^{\nu \alpha + \frac{n^{c}-1}{n-1}\beta}t^{c}. \notag
 \end{align}

\subsection{The Conjugacy Problem in $BS(n,1)$}\label{sec: CP}

Let us observe the role of inner automorphisms. Left conjugation by an arbitrary element $a^{\beta}t^r\in BS(n,1)$, $\beta\in \ZZ[1/n]$, $r\in \ZZ$, acts as
 \begin{align*}
a\quad \mapsto \quad & \big( a^{\beta} t^r \big)\, a \,\big( a^{\beta} t^r \big)^{-1}= a^{\beta}t^r a t^{-r} a^{-\beta} =a^{\beta}a^{n^r} a^{-\beta} =a^{n^r}, \\ t \quad \mapsto \quad & \big( a^{\beta} t^r \big)\, t \,\big( a^{\beta} t^r \big)^{-1}= a^{\beta}t a^{-\beta} =a^{\beta}a^{-n\beta} t=a^{(1-n)\beta}t, 
 \end{align*}
That is, left conjugation by $a^{\beta}t^r$ equals $\varphi_{n^r,(1-n)\beta}$. Therefore, 
 $$
\Inn (BS(n,1))=\left\{ \varphi_{\alpha, \beta} \mid \alpha =n^r\, (r\in \ZZ),\, \beta\in (n-1)\ZZ[\frac{1}{n}] \right\} \unlhd \Aut(BS(n,1)).
 $$

As a straightforward consequence, we can deduce the following solution to the Conjugacy Problem for $BS(n,1)$, a well-know classical result (see~\cite{Anshel-Stebe}):

\begin{prop}\label{prop: CP}
The Conjugacy Problem is solvable in $BS(n,1)$.
\end{prop}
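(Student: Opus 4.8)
The plan is to reduce the conjugacy question to an arithmetic condition in $\ZZ[1/n]$ that can be checked effectively. Given two input words, I would first use the normal form to rewrite them as $u=a^{\alpha_1}t^{c_1}$ and $v=a^{\alpha_2}t^{c_2}$ with $\alpha_i\in\ZZ[1/n]$ and $c_i\in\ZZ$; this is algorithmic by iterating the jumping rules~\eqref{eq: jumps}. Applying $\pi$ from~\eqref{ses} shows that $c_1=c_2$ is a necessary condition, since the total $t$-exponent is a conjugation invariant. So I may assume $c_1=c_2=:c$ and otherwise declare the elements non-conjugate.

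Next I would compute, for an arbitrary conjugating element $w=a^{\beta}t^{r}$, the conjugate $w\,u\,w^{-1}$ using the product~\eqref{eq: product} and inverse~\eqref{eq: inverse} rules (equivalently, using the inner-automorphism description $\varphi_{n^r,(1-n)\beta}$ together with the action formula~\eqref{eq: action}). The outcome is
$$
w\,(a^{\alpha_1}t^{c})\,w^{-1}=a^{\,n^{r}\alpha_1+(1-n^{c})\beta}\,t^{c}.
$$
Hence $u$ and $v$ are conjugate if and only if there exist $r\in\ZZ$ and $\beta\in\ZZ[1/n]$ with $n^{r}\alpha_1+(1-n^{c})\beta=\alpha_2$, that is, $\alpha_2-n^{r}\alpha_1\in(1-n^{c})\ZZ[1/n]$ for some $r$.

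I would then split into two cases. When $c=0$ the coefficient $1-n^{c}$ vanishes and the condition collapses to $\alpha_2=n^{r}\alpha_1$ for some $r\in\ZZ$; this is immediately decidable by checking whether the rational number $\alpha_2/\alpha_1$ (or, if $\alpha_1=0$, whether $\alpha_2=0$) is an integer power of $n$. When $c\neq 0$, the key observation is that $\gcd(n,n^{|c|}-1)=1$, so $n$ is a unit in the quotient $\ZZ[1/n]/(1-n^{c})\ZZ[1/n]\cong\ZZ/(n^{|c|}-1)\ZZ$, which is a \emph{finite} cyclic ring; moreover $n^{|c|}\equiv 1$ there, so the residues $n^{r}\alpha_1$ are periodic in $r$ with period dividing $|c|$. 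The decision then reduces to checking the finitely many congruences $\alpha_2\equiv n^{r}\alpha_1\pmod{n^{|c|}-1}$ for $r=0,1,\ldots,|c|-1$, each of which is effectively computable from the fractions representing $\alpha_1,\alpha_2$.

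The only genuinely delicate point is the case $c\neq 0$: one must recognise that quotienting the non-finitely-generated group $\ZZ[1/n]$ by the subgroup $(1-n^{c})\ZZ[1/n]$ yields a finite cyclic group, which is precisely what turns the existential quantifier over $\beta\in\ZZ[1/n]$ and $r\in\ZZ$ into a finite, effective search; everything else is routine normal-form bookkeeping. The degenerate values $n\in\{-1,0,1\}$ are either covered uniformly by the same argument (for $n=1$ the group is $\ZZ^2$ and conjugacy is just equality) or handled separately as trivial instances.
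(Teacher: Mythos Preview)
Your proposal is correct and follows essentially the same route as the paper: reduce to the equation $\nu_2=\nu_1 n^{r}-(n^{c}-1)\beta$ via the inner-automorphism description $\varphi_{n^r,(1-n)\beta}$ and the action formula~\eqref{eq: action}, then observe that modulo $n^{|c|}-1$ only finitely many values of $r$ need to be tested. The only cosmetic difference is that the paper clears denominators explicitly to land in $\ZZ$, whereas you phrase the same step as identifying the quotient $\ZZ[1/n]/(1-n^{c})\ZZ[1/n]\cong \ZZ/(n^{|c|}-1)\ZZ$; you also separate out the case $c=0$ explicitly, which the paper leaves implicit.
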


\begin{proof}
Let $a^{\nu_1}t^{c_1},\, a^{\nu_2}t^{c_2}\in BS(n,1)$ be two arbitrary elements, where $\nu_i =k_i/n^{p_i}\in \ZZ[1/n]$ with $k_i, p_i, c_i\in \ZZ$, $p_i\geq 0$, $i=1,2$. Inverting them, if necessary, we can assume $c_1\geq 0$. By the previous analysis and equation~\eqref{eq: action}, left conjugation by $a^{\beta}t^r$ acts as 
$\varphi_{n^r,(1-n)\beta}$. Therefore, $a^{\nu_1}t^{c_1}$ and $a^{\nu_2}t^{c_2}$ are conjugated to each other if and only if $c_1=c_2$ (let us call it just $c\geq 0$) and there exists $\beta\in \ZZ[1/n]$ and $r\in \ZZ$ such that 
 $$
\nu_2 =\nu_1 n^r +\frac{n^c-1}{n-1}(1-n)\beta =\nu_1 n^r -(n^c-1)\beta.
 $$
This happens if and only if there exists $\beta=\ell/n^q\in \ZZ[1/n]$, $\ell,q\in \ZZ$, $q\geq 0$, and $r\in \ZZ$, such that 
 $$
\frac{k_2}{n^{p_2}}= \frac{k_1 n^r}{n^{p_1}}-(n^c-1)\frac{\ell}{n^{q}}=\frac{k_1n^{r+q}-(n^c-1)\ell n^{p_1}}{n^{p_1+q}}
 $$
 $$
k_2 n^{p_1+q}=k_1n^{p_2+r+q}-(n^c-1)\ell n^{p_1+p_2}.
 $$
Taking $q=p_1+p_2$, canceling out $n^{p_1+p_2}$ from all this integral equation, and reducing it modulo $n^c-1$, this happens if and only if there exists $r'=0,1,\ldots ,c-1$ such that 
 $$
k_2 \equiv k_1 n^{r'} \pmod{n^c-1}.
 $$
This can be easily decided, completing the proof. 
%
\end{proof}

\section{The twisted conjugacy problem in $BS(n,1)$}\label{sec: TCP}

Our goal in this section is to solve the Twisted Conjugacy Problem for the group $BS(n,1)$. First, we record the following general and elementary fact, which is true in an arbitrary group $G$. The proof is direct from the definitions.

\begin{lem} \label{lem: twisted autos}
Let $G$ be a group, $u,v,w \in G$, $\varphi, \psi \in \Aut(G)$ and $\gamma_w \in \Inn(G)$. Then the following are equivalent:
 \begin{itemize}
\item[(i)] $u \sim_\phi v$,
\item[(ii)] $u \psi \sim_{\psi^{-1}\varphi\psi} v \psi$,
\item[(iii)] $wu \sim_{\varphi\gamma_{w^{-1}}} wv$, \item[(iv)] $uw \sim_{\gamma_{w^{-1}}\varphi} vw$. \hfill \qed
 \end{itemize}
\end{lem}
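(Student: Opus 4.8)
The plan is to fix the defining equation $u\sim_\varphi v \iff v=(x\varphi)^{-1}ux$ for some $x\in G$, and to prove each of (ii), (iii), (iv) equivalent to (i) separately by a direct manipulation of this equation. In each case I will exhibit a witness realizing the forward implication and observe that the witness construction is a bijection of $G$, so the implication automatically reverses. Throughout I use the paper's conventions: maps act on the right, $\varphi\psi$ means ``first $\varphi$, then $\psi$'', and $\gamma_g\colon x\mapsto g^{-1}xg$, so that $(x)\gamma_{w^{-1}}=wxw^{-1}$.

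For (i)$\iff$(ii) I would apply the automorphism $\psi$ to $v=(x\varphi)^{-1}ux$, which, since $\psi$ is a homomorphism, gives $v\psi=((x\varphi)\psi)^{-1}(u\psi)(x\psi)$; after the substitution $y=x\psi$ one has $(x\varphi)\psi=((y\psi^{-1})\varphi)\psi=y(\psi^{-1}\varphi\psi)$, so $v\psi=(y(\psi^{-1}\varphi\psi))^{-1}(u\psi)y$ with witness $y=x\psi$, and $x\mapsto x\psi$ is a bijection. For (i)$\iff$(iii) I would left-multiply $v=(x\varphi)^{-1}ux$ by $w$ and use $(x)(\varphi\gamma_{w^{-1}})=w(x\varphi)w^{-1}$ to absorb the extra factor: indeed $w(x\varphi)^{-1}ux=(w(x\varphi)w^{-1})^{-1}(wu)x=(x(\varphi\gamma_{w^{-1}}))^{-1}(wu)x$, so the \emph{same} witness $x$ works and the relation is manifestly reversible.

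The one requiring care is (i)$\iff$(iv). Right-multiplying $v=(x\varphi)^{-1}ux$ by $w$ and inserting $w^{-1}w$ gives $vw=(x\varphi)^{-1}(uw)(w^{-1}xw)$, which already has the shape $(\cdot)^{-1}(uw)z$ provided one takes as witness the \emph{conjugate} $z=w^{-1}xw$ rather than $x$ itself. The point is then to verify that the prescribed twisting automorphism produces exactly the factor $(x\varphi)^{-1}$ on the left: applying $\gamma_{w^{-1}}\varphi$ to $z$ means first conjugating, $(z)\gamma_{w^{-1}}=wzw^{-1}=x$, and then applying $\varphi$, so $(z)(\gamma_{w^{-1}}\varphi)=x\varphi$, whence $(z(\gamma_{w^{-1}}\varphi))^{-1}(uw)z=(x\varphi)^{-1}(uw)(w^{-1}xw)=vw$, which is (iv); since $x\mapsto w^{-1}xw$ is a bijection of $G$, the implication reverses. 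I expect this to be the only delicate step: one must conjugate the witness (not keep it, as in (iii)), and one must read $\gamma_{w^{-1}}\varphi$ strictly in the left-action, ``first $\gamma_{w^{-1}}$, then $\varphi$'' convention of the paper, since with the opposite reading the cancellation fails.
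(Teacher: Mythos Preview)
Your argument is correct and is exactly the kind of direct verification the paper has in mind: the paper itself does not spell out a proof, writing only that it ``is direct from the definitions'' and closing with a \qed. Your careful tracking of the right-action convention and of the witness changes (same $x$ for (iii), the conjugate $w^{-1}xw$ for (iv)) is precisely what is needed.
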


\begin{thm}\label{thm: main TCP}
The Twisted Conjugacy Problem is solvable in $BS(n,1)$.
\end{thm}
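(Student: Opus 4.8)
The plan is to translate $\varphi$-twisted conjugacy into an explicit identity in $\ZZ[1/n]$ and then reduce it to a finite modular check. Write $u=a^{\nu_1}t^{c_1}$, $v=a^{\nu_2}t^{c_2}$ and $\varphi=\varphi_{\alpha,\beta}$ with $\alpha\in\ZZ[1/n]^*$, $\beta\in\ZZ[1/n]$ (Proposition~\ref{prop: O'Neil}). First I would note that every automorphism sends $t\mapsto a^{\beta}t$, so it preserves the $t$-exponent, $\pi\varphi=\pi$; applying $\pi$ from~\eqref{ses} to $v=(w\varphi)^{-1}uw$ forces $c_1=c_2$, a necessary condition that can be checked immediately. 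Writing the unknown conjugator as $w=a^{\mu}t^r$ (with $\mu\in\ZZ[1/n]$, $r\in\ZZ$) and expanding $(w\varphi)^{-1}uw$ by means of the action formula~\eqref{eq: action}, the product~\eqref{eq: product} and the inversion~\eqref{eq: inverse}, the $t$-parts match automatically once $c_1=c_2=:c$, while the $a$-parts reduce (after clearing a factor $n^{-r}$) to the single equation
$$
\mu\,(n^c-\alpha)=n^r\nu_2-\nu_1+\tfrac{n^r-1}{n-1}\beta=:N(r),
$$
where $N(r)\in\ZZ[1/n]$ for every $r$. Thus $u\sim_{\varphi}v$ if and only if this equation admits a solution $(\mu,r)\in\ZZ[1/n]\times\ZZ$.

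Next I would split on $\delta:=n^c-\alpha$. If $\delta=0$ the equation forces $N(r)=0$, which after clearing denominators reads $n^r\big[(n-1)\nu_2+\beta\big]=(n-1)\nu_1+\beta$; so one only has to decide whether a fixed rational number is an integral power of $n$, which is effective. If $\delta\neq0$, then for each $r$ the candidate $\mu=N(r)/\delta$ is a determined rational, and the real issue is whether $\mu\in\ZZ[1/n]$ for some $r$. Here the key structural fact is that $\ZZ[1/n]$ is a localization of $\ZZ$, hence a PID: writing $\delta=(\text{unit})\cdot m$ with $m$ the part of the numerator of $\delta$ coprime to $n$, we get $\delta\ZZ[1/n]=m\ZZ[1/n]$, and the quotient ring $R:=\ZZ[1/n]/\delta\ZZ[1/n]\simeq\ZZ/m\ZZ$ is finite with $n$ invertible in it. Hence $\mu\in\ZZ[1/n]$ exactly when $\overline{N(r)}=0$ in $R$, and it remains to decide whether $r\mapsto\overline{N(r)}$ ever vanishes. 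I would prove this map is periodic: since $\bar n\in R^*$, the quantities $f(r):=\overline{\tfrac{n^r-1}{n-1}}$ obey the affine recurrence $f(r+1)=\bar n f(r)+1$ over the finite ring $R$, whose transition $x\mapsto\bar n x+1$ is a bijection of the finite set $R$ and therefore periodic in both directions; combined with the periodicity of $\bar n^r$, this makes $\overline{N(r)}=\bar n^r\bar\nu_2-\bar\nu_1+f(r)\bar\beta$ periodic in $r$ with a computable period $P$, so one only needs to test $r=0,1,\dots,P-1$.

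The main obstacle is precisely that $\ZZ[1/n]$ is \emph{not} a field: in the generic case $\delta\neq0$ one cannot merely solve for $\mu$ and stop, because the integrality constraint $\mu\in\ZZ[1/n]$ has to be decided across infinitely many candidate exponents $r$. Everything hinges on replacing that infinite search by a finite one, which is exactly what the finiteness of $R\simeq\ZZ/m\ZZ$ together with the periodicity of $\overline{N(r)}$ supplies. The reductions recorded in Lemma~\ref{lem: twisted autos} are available should one wish to pre-normalize $\varphi$ (for instance to simplify $\beta$ via a $\psi$- or inner conjugation before computing), but the direct expansion above already isolates the decidable core, so I would carry the argument out in this explicit form.
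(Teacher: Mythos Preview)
Your proof is correct. Both you and the paper reduce twisted conjugacy to a finite modular check, but the packaging differs. The paper first uses Lemma~\ref{lem: twisted autos} to normalize to $c=0$ and $u=a^{m_1}$, then clears all denominators to obtain an integer equation $An^x+By=Cn^z$ in three unknowns ($x,z'\geq 0$), and decides it by testing whether the finite subsets $A\{1,n,n^2,\ldots\}$ and $C'\{1,n,n^2,\ldots\}$ of $\ZZ/B\ZZ$ intersect. You skip the pre-normalization entirely, keep the single equation $\mu\,\delta=N(r)$ inside $\ZZ[1/n]$ with $\delta=n^c-\alpha$, and then pass to the finite quotient ring $\ZZ[1/n]/\delta\ZZ[1/n]\simeq\ZZ/m\ZZ$, so that solvability becomes the vanishing of one periodic sequence $r\mapsto\overline{N(r)}$. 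The degenerate cases line up exactly: after the paper's reduction to $c=0$, its $B=0$ is precisely your $\delta=0$, and both are handled by testing whether a given ratio is an integral power of $n$. Your route is more ring-theoretic and avoids both the Lemma~\ref{lem: twisted autos} reductions and the denominator-clearing, at the price of the extra affine-recurrence periodicity argument for $f(r)=\overline{(n^r-1)/(n-1)}$; the paper's route is more hands-on but carries an extra factor of $n-1$ in its modulus and juggles two power-of-$n$ unknowns instead of one.
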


\begin{proof}
Suppose we are given a fixed automorphism $\varphi_{\alpha, \beta}\in \Aut(BS(n,1))$, 
where $\alpha=k/n^p \in \ZZ[1/n]^*, \beta=\ell/n^q\in \ZZ[1/n]$ with $k,\ell, p,q\in \ZZ$, $p,q\geq 0$. Suppose we are also given two elements $u=a^{\nu_1}t^{c_1},\, v=a^{\nu_2}t^{c_2}\in BS(n,1)$, where $\nu_i =m_i/n^{r_i}\in \ZZ[1/n]$ with $m_i, r_i, c_i\in \ZZ$, $r_i\geq 0$, $i=1,2$. We have to decide whether $a^{\nu_1}t^{c_1}$ and $a^{\nu_2}t^{c_2}$ are $\varphi_{\alpha, \beta}$-twisted conjugated to each other, i.e., whether $u\sim_{\varphi_{\alpha, \beta}} v$. Observe that, again, $c_1=c_2$ (call it just $c$) is an obvious necessary condition. 

Applying Lemma~\ref{lem: twisted autos} (i)$\Leftrightarrow$(iv) with $w=t^{-c}$, we can assume $c_1=c_2=0$, i.e., $a^{\nu_i}t^{c_i}=t^{-r_i}a^{m_i}t^{r_i}$, $i=1,2$. Then, applying Lemma~\ref{lem: twisted autos} (i)$\Leftrightarrow$(ii) with $\psi=\gamma_{t^{-r_1}}$, we can assume $r_1=0$. In other words, we are reduced to inputs of the form $u=a^{m_1},\, v=a^{\nu_2}=t^{-r_2}a^{m_2}t^{r_2}\in BS(n,1)$.

Writing the possible $\varphi_{\alpha, \beta}$-twisted conjugator as $g=a^{\nu}t^d$, $\nu\in \ZZ[1/n]$, $d\in \ZZ$, and using equations~\eqref{eq: action} and~\eqref{eq: inverse}, we have 
 $$
g\varphi_{\alpha,\beta} =a^{\nu \alpha + \frac{n^{d}-1}{n-1}\beta}t^{d},
 $$
 $$
\big( g\varphi_{\alpha,\beta} \big)^{-1} =a^{-n^{-d}(\nu \alpha + \frac{n^{d}-1}{n-1}\beta)}t^{-d},
 $$
and 
 $$
\big( g\varphi_{\alpha,\beta} \big)^{-1} ug= a^{-n^{-d}(\nu \alpha + \frac{n^{d}-1}{n-1}\beta)} t^{-d} a^{m_1} a^{\nu}t^d= a^{-n^{-d}\nu \alpha + \frac{n^{-d}-1}{n-1}\beta} a^{m_1 n^{-d}} a^{\nu n^{-d}}.
 $$
So, $a^{m_1}\sim_{\varphi_{\alpha, \beta}} a^{\nu_2}$ if and only if 
 $$
\frac{m_2}{n^{r_2}} n^d =-\nu \alpha +\frac{1-n^{d}}{n-1}\beta +m_1 +\nu, 
 $$
 $$
\Leftrightarrow (n-1)m_2 n^d +(n-1)n^{r_2}\nu (\alpha -1) +(n^{d}-1)n^{r_2}\beta =(n-1)n^{r_2}m_1, 
 $$
for some $\nu\in \ZZ[1/n]$ and $d\in \ZZ$. Writing $\nu =y/n^x$, $x,y\in \ZZ$, $x\geq 0$, and applying the change of variable $\ZZ \ni z=d+x$, this is equivalent to the integral equation  
 $$
(n-1)m_2 n^d +(n-1)n^{r_2}\frac{y}{n^x} \frac{k-n^p}{n^p} +(n^{d}-1)n^{r_2}\frac{\ell}{n^q} =(n-1)n^{r_2}m_1, 
 $$
 $$
\Rightarrow (n-1)m_2 n^{d+x+p+q} +(n-1)y(k-n^p)n^{r_2+q} +(n^{d}-1)\ell n^{r_2+p+x} =(n-1)m_1 n^{r_2+p+q+x}, 
 $$
 $$
\Rightarrow \Big( \ell n^{r_2+p}+(n-1)m_1 n^{r_2+p+q}\Big) n^x +\Big( (n-1)(n^p-k)n^{r_2+q} \Big) y =\Big( (n-1)m_2 n^{p+q} +\ell n^{r_2+p}\Big) n^{z}.
 $$
From the data $k, p, \ell, q, m_1, m_2, r_2, n\in \ZZ$, $p,q,r_2,n\geq 0$, compute the three integers 
 $$
A=\ell n^{r_2+p}+(n-1)m_1 n^{r_2+p+q},\qquad B=(n-1)(n^p-k)n^{r_2+q}, \qquad C=(n-1)m_2 n^{p+q}+\ell n^{r_2+p}.
 $$
Summarizing, $A,B,C\in \ZZ$, and $u$ and $v$ are $\varphi_{\alpha, \beta}$-twisted conjugated to each other if and only if the equation 
 \begin{equation}\label{eq: equation}
An^x +By=Cn^z
 \end{equation}
has an integral solution, for the unknowns $x,y,z\in \ZZ$, with $x\geq 0$. Write $C=n^s C'$ with $s\geq 0$ and $C'$ not being multiple of $n$, and note that all possible solutions to equation~\eqref{eq: equation} have $z\geq -s$. So, with the change of variable $\ZZ \ni z'=z+s$, \eqref{eq: equation} is equivalent to 
 \begin{equation}\label{eq: equation2}
An^x +By=C'n^{z'},
 \end{equation}
for the unknowns $x,y,z'\in \ZZ$, with $x,z'\geq 0$. Let us distinguish three cases. 

\textbf{Case 1: $B=\pm 1$}. Obviously, equation~\eqref{eq: equation2} always has solutions.  

\textbf{Case 2: $B=0$}. Our equation becomes $An^x=C' n^{z'}$, which has the desired solution if and only if either $A=C'=0$, or both $A,C'\neq 0$ and $A/C$ is a rational number being a (positive or negative) power of $n$. This can be easily checked by looking at the prime factorizations of $A$, $C'$, and $n$.

\textbf{Case 3: $B\neq 0$}. Now, equation~\eqref{eq: equation2} is equivalent to 
 $$
An^x \equiv C'n^{z'} \pmod{B},
 $$
for the unknowns $x,z'\in \ZZ$, with $x,z'\geq 0$. Compute the finite set $\mathcal{N}=\{1,n,n^2, \ldots \}\subseteq \ZZ/B\ZZ$ (by computing the successive powers of $n$ until obtaining the first repetition modulo $B$). Clearly, our equation admits a solution of and only if the subsets $A\mathcal{N}$ and $C'\mathcal{N}$ from $\ZZ/B\ZZ$ intersect non-trivially. 

This completes the proof.
\end{proof}

\section{Orbit Decidability for the full $\Aut(BS(n,1))$}\label{sec: OD}

In this section we are going to solve the Orbit Decidability Problem for the whole automorphism group $\Aut(BS(n,1))$, i.e., given two elements $u,v \in BS(n,1)$ we will decide whether there exists $\varphi_{\alpha, \beta} \in \Aut(BS(n,1))$ such that $u\varphi_{\alpha, \beta}=v$. Before, we need to remind a folklore algorithmic result about the ring $\big( \ZZ[1/n], +, \cdot \big)$. (Note that, till now, we have considered $\ZZ[1,n]$ just as an additive group, and $\ZZ[1/n]^*$ as the (multiplicative) group of units of the ring $\big( \ZZ[1/n], +, \cdot \big)$.) 

It is well known that, as a ring, $\ZZ[1/n]$ is an Euclidean domain; in particular, (i) we have an easy algorithm to compute greatest common divisors of elements in $\ZZ[1/n]$; and (ii) $\ZZ[1/n]$ is a principal ideal domain. See~\cite{Dummit-Foote} for a general basic reference. 

\begin{prop}\label{prop: coset of ideal}
There is an algorithm which, given $\alpha, \delta\in \ZZ[1/n]$ decides whether the coset $\alpha +\delta\ZZ[1/n]$ contains an invertible element. 
\end{prop}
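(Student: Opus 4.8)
The plan is to reduce the problem to a membership test inside a finite abelian group. First I would dispose of the degenerate case $\delta=0$: here $\alpha+\delta\ZZ[1/n]=\{\alpha\}$ is a singleton, and one merely tests whether $\alpha\in\ZZ[1/n]^*$, i.e., whether the numerator of $\alpha$ divides a power of $n$ --- decidable by the gcd algorithm recalled above. So from now on assume $\delta\neq 0$.

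The first substantial step is to normalise the ideal $\delta\ZZ[1/n]$. Writing $\delta=k/n^p$, the factor $1/n^p$ is a unit, so $\delta\ZZ[1/n]=k\ZZ[1/n]$; and by iterated gcd with $n$ I can strip from $|k|$ all prime factors dividing $n$ (this only multiplies the generator by a unit of $\ZZ[1/n]$), thereby replacing the generator by a positive integer $m$ coprime to $n$ with $\delta\ZZ[1/n]=m\ZZ[1/n]$. The key structural fact is then that the quotient is \emph{finite}: since $\gcd(m,n)=1$, $n$ is invertible modulo $m$, and the ring homomorphism $\ZZ[1/n]\to\ZZ/m\ZZ$, $k'/n^{p'}\mapsto k'\,(n^{-1})^{p'}\pmod m$, is well defined, surjective, and has kernel exactly $m\ZZ[1/n]$ (if an integer lies in $m\ZZ[1/n]$ then, clearing denominators and using $\gcd(m,n)=1$, it is divisible by $m$). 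Hence $\ZZ[1/n]/\delta\ZZ[1/n]\cong\ZZ/m\ZZ$.

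Under this isomorphism the question becomes: is the image $\bar\alpha\in\ZZ/m\ZZ$ the reduction of some unit of $\ZZ[1/n]$? Recalling that an element $k/n^p$ is a unit precisely when $k$ divides a power of $n$, the unit group is $\ZZ[1/n]^*=\{\pm p_1^{a_1}\cdots p_s^{a_s}\mid a_i\in\ZZ\}$, where $p_1,\dots,p_s$ are the distinct primes dividing $n$. Since $\gcd(m,n)=1$, each $\bar p_i$ lies in the finite group $(\ZZ/m\ZZ)^*$ and has finite order, so negative exponents cause no difficulty. Consequently the image of $\ZZ[1/n]^*$ in $\ZZ/m\ZZ$ is exactly the finite subgroup $H=\langle -1,\bar p_1,\dots,\bar p_s\rangle\leqslant(\ZZ/m\ZZ)^*$, and the coset $\alpha+\delta\ZZ[1/n]$ contains an invertible element if and only if $\bar\alpha\in H$.

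This final condition is plainly decidable: factor the fixed integer $n$ once, reduce the primes $p_i$ modulo $m$, enumerate the finite subgroup $H$ generated by $-1$ and the $\bar p_i$ inside $(\ZZ/m\ZZ)^*$, and test whether $\bar\alpha$ lies in it. The only genuine content --- and the step I expect to require the most care --- is the structural reduction of the second paragraph: verifying that $\delta\ZZ[1/n]$ is generated by an integer $m$ coprime to $n$, that the quotient is the finite ring $\ZZ/m\ZZ$, and that the reduction of $\ZZ[1/n]^*$ is precisely $H$. Everything downstream is a finite search.
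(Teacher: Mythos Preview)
Your argument is correct. The reduction to a membership test in a finite abelian group is sound: once $\delta$ is replaced by a positive integer $m$ coprime to $n$, the map $\ZZ[1/n]\to\ZZ/m\ZZ$ is a well-defined surjective ring homomorphism with kernel $m\ZZ[1/n]$, and the image of $\ZZ[1/n]^*$ is exactly the subgroup $H=\langle -1,\bar p_1,\dots,\bar p_s\rangle$ of $(\ZZ/m\ZZ)^*$; so the coset contains a unit iff $\bar\alpha\in H$, a finite check.

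Your route differs from the paper's in one structural respect. The paper only normalises $\delta$ to an integer (not necessarily coprime to $n$), expresses the problem as deciding whether $kn^z+\delta n^r x$ lies in $\ZZ[1/n]^*$ for some $x,z$, and then performs two separate finite checks in $\ZZ/\delta\ZZ$ according to whether $z\ge r$ or $z\le r$, computing the finite sets $\mathcal N=\{n^i\bmod\delta\}$ and $\mathcal P=\{p_1^{e_1}\cdots p_m^{e_m}\bmod\delta\}$ by hand. Your extra normalisation---stripping the primes of $n$ from the generator so that $\gcd(m,n)=1$---is what makes $n$ invertible modulo $m$ and hence collapses the two cases into a single clean quotient $\ZZ[1/n]/m\ZZ[1/n]\cong\ZZ/m\ZZ$. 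This buys you a shorter and more conceptual proof with no case split; the paper's approach, by contrast, stays closer to explicit integer arithmetic and avoids invoking the ring isomorphism. Both lead to the same finite search.
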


\begin{proof}
Note that generators of (principal) ideals in $\ZZ[1/n]$ work up to products by units of the ring, So, without loss of generality, we can assume $\delta\in \ZZ$. The coset at play is then the subset 
 $$
\alpha +\delta\ZZ[1/n] =\{ \alpha +\delta\beta \mid \beta\in \ZZ[1/n]\}\subseteq \ZZ[1/n].
 $$
So, given $\alpha=k/n^r\in \ZZ[1/n]$, $k,r\in \ZZ$, $r\geq 0$, and given $\delta\in \ZZ$,  we have to decide whether there exists $x,z\in \ZZ$, $z\geq 0$, such that 
 $$
\frac{k}{n^r}+\delta \frac{x}{n^z} =\frac{kn^z+\delta x n^r}{n^{r+z}}
 $$
or, equivalently, $kn^z+\delta n^r x$ belongs to $\ZZ[1/n]^*$. In other words, we have to decide whether there exists $x,z\in \ZZ$, $z\geq 0$, such that the integer $kn^z+\delta n^r x$ factorizes involving only primes from the set $\{p_1, \ldots ,p_m\}$ (those appearing in the prime factorization of $n=p_1^{e_1}\cdots p_m^{e_m}$).  

In order to decide this, compute the finite sets
 $$
\mathcal{N}=\{1, n, n^2, \ldots\}\subseteq \ZZ/\delta \ZZ,
 $$
 $$
\mathcal{P}_i=\{1, p_i, p_i^2, \ldots \}\subseteq \ZZ/\delta \ZZ,
 $$
for $i=1,\ldots ,m$ (note that, on each  one, there are infinitely many powers to compute, but they take only finitely many values modulo $\delta$, so we only have to compute until getting the first repetition). Compute also the finite set 
 $$
\mathcal{P}=\{y_1\cdots y_m \mid y_1\in \mathcal{P}_1, \ldots , y_m\in \mathcal{P}_m \}\subseteq \ZZ/\delta \ZZ.
 $$
Let us distinguish two cases, $z\geq r$ and $z\leq r$. 
\begin{itemize}
\item There exists $x,z\in \ZZ$, $z\geq r$, such that $kn^z+\delta n^r x$ is invertible in $\ZZ[1/n]$ if and only if there exists $x,z'\in \ZZ$, $z'\geq 0$, such that $kn^{z'}+\delta x\in \ZZ[1/n]^*$, and this happens if and only if $\mathcal{P}$ and $k\mathcal{N}$ intersect non-trivially as subsets of $\ZZ/\delta \ZZ$. 
\item There exists $x,z\in \ZZ$, $z\leq r$, such that $kn^z+\delta n^r x$ is invertible in $\ZZ[1/n]$ if and only if there exists $x\in \ZZ$ such that at least one of $k+\delta xn^r,\, \ldots ,k+\delta xn^0$ belongs to $\ZZ[1/n]^*$; which is equivalent to the existence of $x\in \ZZ$ such that $k+\delta x\in \ZZ[1/n]^*$. And this happens if and only if $k\in \mathcal{P} \pmod{\delta}$. 
\end{itemize} 
These two conditions are clearly decidable with elementary algorithms. This completes the proof. 
\end{proof}

\begin{thm}\label{thm: main 1 OD}
The full automorphism group $A=\Aut(BS(n,1))$ is orbit decidable.
\end{thm}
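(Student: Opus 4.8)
The plan is to translate the equation $u\varphi_{\alpha,\beta}=v$ into a single arithmetic condition in the ring $\ZZ[1/n]$ and then decide that condition effectively. Write the two inputs in normal form, $u=a^{\nu_1}t^{c_1}$ and $v=a^{\nu_2}t^{c_2}$ with $\nu_i\in\ZZ[1/n]$, $c_i\in\ZZ$. Since every $\varphi_{\alpha,\beta}$ sends $a$ to an element of $t$-exponent $0$ and $t$ to an element of $t$-exponent $1$, it fixes $\pi$; hence $c_1=c_2$ (call it $c$) is a necessary condition, which is immediate to check. Assuming $c_1=c_2=c$, the action formula~\eqref{eq: action} gives $u\varphi_{\alpha,\beta}=a^{\nu_1\alpha+\frac{n^c-1}{n-1}\beta}t^c$, so that $v$ lies in the $\Aut$-orbit of $u$ if and only if there exist a unit $\alpha\in\ZZ[1/n]^*$ and an arbitrary $\beta\in\ZZ[1/n]$ with
$$\nu_1\alpha+\delta_c\,\beta=\nu_2,\qquad \delta_c=\tfrac{n^c-1}{n-1}\in\ZZ[1/n].$$
As $\beta$ runs through $\ZZ[1/n]$ the term $\delta_c\beta$ fills the ideal $\delta_c\ZZ[1/n]$, so the question becomes whether some associate $\nu_1\alpha$ of $\nu_1$ is congruent to $\nu_2$ modulo $\delta_c\ZZ[1/n]$.

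Next I would dispose of the degenerate case $c=0$, where $\delta_0=0$ and the condition reduces to $\nu_1\alpha=\nu_2$ for a unit $\alpha$; this merely asks whether $\nu_1$ and $\nu_2$ are associates in $\ZZ[1/n]$ (both zero, or $\nu_2/\nu_1\in\ZZ[1/n]^*$), which is decided by comparing prime factorizations against the primes dividing $n$. For $c\neq 0$ the key reduction is that $\delta_c\ZZ[1/n]=D\,\ZZ[1/n]$, where $D=\frac{n^{|c|}-1}{n-1}$ is an integer coprime to $n$ (for $c<0$ one computes $\delta_c=-n^{-|c|}\delta_{|c|}$, an associate of $\delta_{|c|}$, and $D\equiv 1\pmod n$). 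Because $\gcd(D,n)=1$, reduction modulo $D$ induces a ring isomorphism $\ZZ[1/n]/D\,\ZZ[1/n]\cong\ZZ/D\ZZ$, the class of $1/n$ being the inverse of $\bar n$. Under this isomorphism the congruence $\nu_1\alpha\equiv\nu_2\pmod{D}$ becomes the finite equation $\bar\nu_1\,\bar\alpha=\bar\nu_2$ in $\ZZ/D\ZZ$, where the residues $\bar\nu_i$ are obtained by inverting $\bar n$ against the denominators of the $\nu_i$.

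Finally I would identify which residues $\bar\alpha$ are admissible. The unit group $\ZZ[1/n]^*$ is generated by $-1$ together with the primes $p_1,\dots,p_m$ dividing $n$; since each $p_j$ is coprime to $D$, its reduction lies in $(\ZZ/D\ZZ)^*$, and the image of $\ZZ[1/n]^*$ is the finite, explicitly computable subgroup $S=\langle\,\bar{-1},\bar p_1,\dots,\bar p_m\,\rangle\leqslant(\ZZ/D\ZZ)^*$. Consequently $v$ lies in the $\Aut$-orbit of $u$ if and only if $\bar\nu_2\in\bar\nu_1\cdot S$, a membership test among finitely many residues of $\ZZ/D\ZZ$ settled by direct enumeration; this is the same mechanism underlying Proposition~\ref{prop: coset of ideal}, here specialized to the subgroup $S$ rather than all of $(\ZZ/D\ZZ)^*$.

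I expect the main obstacle to be precisely the unit constraint on $\alpha$. Were $\alpha$ allowed to range over all of $\ZZ[1/n]$, the equation $\nu_1\alpha+\delta_c\beta=\nu_2$ would be a routine gcd/divisibility question in the Euclidean domain $\ZZ[1/n]$; forcing $\alpha$ to be invertible is what makes the problem delicate. The crux of the argument is the passage to the finite ring $\ZZ/D\ZZ$ together with the observation that the image of the unit group there is a finite group one can list, and the coprimality $\gcd(D,n)=1$ coming from $D=\frac{n^{|c|}-1}{n-1}$ is exactly what makes this reduction available. (The exceptional parameters $n\in\{-1,0,1\}$, giving the Klein bottle group, $\ZZ$, and $\ZZ^2$, are handled separately by elementary means.)
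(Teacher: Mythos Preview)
Your proof is correct and takes a more direct route than the paper's. Both arguments begin by reducing to the equation $\nu_1\alpha+\mu\beta=\nu_2$ with $\mu=\frac{n^c-1}{n-1}$, $\alpha\in\ZZ[1/n]^*$, $\beta\in\ZZ[1/n]$. The paper then proceeds in two stages: first it computes $\gcd(\nu_1,\mu)$ in the Euclidean domain $\ZZ[1/n]$, checks divisibility of $\nu_2$, simplifies to a coprime pair $(\nu_1',\mu')$, and uses B\'ezout to parametrize all solutions as $\alpha=\alpha_0+\lambda\mu'$; second it invokes the standalone Proposition~\ref{prop: coset of ideal} to decide whether the coset $\alpha_0+\mu'\ZZ[1/n]$ contains a unit. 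You instead exploit immediately that $\mu$ is an associate of the integer $D=\frac{n^{|c|}-1}{n-1}$ with $\gcd(D,n)=1$, so that $\ZZ[1/n]/D\ZZ[1/n]\cong\ZZ/D\ZZ$ and the whole question collapses to the finite membership test $\bar\nu_2\in\bar\nu_1\cdot S$ for the explicitly computable subgroup $S=\langle\bar{-1},\bar p_1,\dots,\bar p_m\rangle\leqslant(\ZZ/D\ZZ)^*$. Your path avoids the gcd/B\'ezout bookkeeping and effectively absorbs the content of Proposition~\ref{prop: coset of ideal} into a single finite-ring check; the paper's version, by contrast, isolates Proposition~\ref{prop: coset of ideal} as a reusable statement valid for arbitrary $\delta\in\ZZ[1/n]$ (not necessarily coprime to $n$), a generality not actually needed here since $D\equiv 1\pmod n$. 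Your handling of $c<0$ via $\delta_c=-n^{-|c|}\delta_{|c|}$ is also slightly cleaner than the paper, which tacitly writes $\mu\in\ZZ$ without reducing to $c\geq 0$.
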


\begin{proof}
We first recall from~\eqref{eq: aut semi direct} and~\eqref{eq: action} that $\Aut(BS(n,1))=\{ \varphi_{\alpha, \beta} \mid \alpha\in \ZZ[1/n]^*,\, \beta\in \ZZ[1/n]\}$ and the action is described by the equation $(a^\nu t^c)\varphi_{\alpha, \beta} = a^{\nu \alpha + \frac{n^{c}-1}{n-1}\beta}t^{c}$.

Let $u=a^{\nu_1}t^{c_1},\, v=a^{\nu_2}t^{c_2}\in BS(n,1)$ be two given elements, where $\nu_i =k_i/n^{p_i}\in \ZZ[1/n]$ with $k_i, p_i, c_i\in \ZZ$, $p_i\geq 0$, $i=1,2$. We have to decide whether $u$ and $v$ belong to the same orbit, i.e., $u\varphi_{\alpha, \beta} =v$ for some $\alpha\in \ZZ[1/n]^*$ and some $\beta\in \ZZ[1/n]$. As observed above, $c_1 =c_2$ is a necessary condition; let us just denote it by $c$. 

Consider the equation 
 $$
\nu_2 =\nu_1 \alpha +\mu\beta, 
 $$
with unknowns $\alpha \in \ZZ[1/n]^*$ and $\beta \in \ZZ[1/n]$, where $\mu =\frac{n^c -1}{n-1}\in \ZZ \leq\ZZ[1/n]$. Using the fact that $\ZZ[1/n]$ is an Euclidean domain, compute $\gcd(\nu_1, \mu)$; if $\nu_2$ is not multiple of it then, clearly, the above equation has no solution and we are done. So, we can assume it is and, simplifying $\gcd(\nu_1, \mu)$, we are reduced to consider the equation  
 \begin{equation}\label{eq: bezout}
\nu'_2 =\nu'_1 \alpha +\mu'\beta, 
 \end{equation}
where, now, $\gcd(\nu'_1, \mu')=1$. Using Bezout's identity, we can effectively compute a particular solution $\alpha_0,\beta_0\in \ZZ[1/n]$ to it, 
 $$
\nu'_2 =\nu'_1 \alpha_0 +\mu'\beta_0. 
 $$
If, by chance, $\alpha_0$ is invertible, we conclude that $u$ and $v$ belong to the same orbit. Otherwise, we need to run over \emph{all} possible alternative solutions to~\eqref{eq: bezout} and check for invertibility of the corresponding $\alpha$. Note that, for any other possible solution to~\eqref{eq: bezout}, say $\nu'_2 =\nu'_1 \alpha +\mu'\beta$, we have $\nu'_1 (\alpha-\alpha_0 )+\mu' (\beta-\beta_0 )=0$ and so (since $\gcd(\nu'_1, \mu')=1$), $\alpha-\alpha_0 =\lambda \mu'$ and $\beta-\beta_0 =-\lambda \nu_1'$, for some $\lambda\in \ZZ[1/n]$. This means that \emph{all} solutions to ~\eqref{eq: bezout} are of the form 
 $$
\left. \begin{array}{rcl} \alpha & = & \alpha_0+\lambda \mu' \\ \beta & = & \beta_0 -\lambda \nu_1' \end{array} \right\},
 $$
for $\lambda\in \ZZ[1/n]$. It remains to know whether $\alpha_0+\lambda \mu'$ is invertible for some $\lambda\in \ZZ[1/n]$. This can be decided by Proposition~\ref{prop: coset of ideal}. 
\end{proof}

Theorem~\ref{thm: main 1 OD} is the analogous to $BS(n,1)$ of the classical Whitehead's result for free groups. As mentioned above, the following is an immediate consequence using Theorem~\ref{thm: TCP to CP}. 

\begin{cor}
Let $H$ be a torsion-free hyperbolic group and $\varphi_1,\ldots ,\varphi_m\in \Aut(BS(n,1))$ be automorphisms such that $\gen{\varphi_1,\ldots ,\varphi_m} =\Aut(BS(n,1))$. Then, the Conjugacy Problem is solvable in any group of the form $BS(n,1)\rtimes_{\varphi_1, \ldots ,\varphi_m} H$. \qed
\end{cor}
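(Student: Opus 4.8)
The plan is to deduce this directly from Theorem~\ref{thm: TCP to CP}, applied to the split short exact sequence
$$
1 \to BS(n,1) \to BS(n,1)\rtimes_{\varphi_1,\ldots,\varphi_m} H \xrightarrow{\pi} H \to 1,
$$
in which $F=BS(n,1)$ plays the role of the normal subgroup, $G=BS(n,1)\rtimes_{\varphi_1,\ldots,\varphi_m} H$ the middle group, and the torsion-free hyperbolic group $H$ the quotient. Since $F$ is given by a finite presentation, $H$ is finitely presented, and $G$ is the associated semidirect product with the action specified by the $\varphi_i$ on the generators of $H$, the inclusion and the projection $\pi$ are given by images of generators; computing images of elements, preimages under $\pi$, and preimages in $F$ of kernel elements are then all effective, so the sequence is algorithmic in the required sense.

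First I would check the three hypotheses of Theorem~\ref{thm: TCP to CP}. Hypothesis (i), that $F=BS(n,1)$ has solvable twisted conjugacy problem, is exactly Theorem~\ref{thm: main TCP}. Hypothesis (ii), that $H$ has solvable conjugacy problem, is the classical fact that hyperbolic groups have solvable conjugacy problem. Hypothesis (iii) is the only place where torsion-freeness of $H$ is genuinely used: in a torsion-free hyperbolic group every nontrivial element $h$ lies in a unique maximal cyclic subgroup $\langle h_0\rangle = C_H(h)$, so $\langle h\rangle$ has finite index in its centralizer; moreover $h_0$ and the index $d$ with $h=h_0^d$ can be computed via the solution to the root problem in hyperbolic groups, yielding the coset representatives $h_0^0,\ldots,h_0^{d-1}$ effectively.

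With the hypotheses in place, Theorem~\ref{thm: TCP to CP} reduces solvability of the conjugacy problem for $G$ to orbit decidability of the action subgroup $A_G\leqslant \Aut(F)$, so the next step is to identify $A_G$. By construction of the semidirect product, conjugation in $G$ by an element of $H$ restricts on $F$ to a product of the $\varphi_i^{\pm 1}$, while conjugation by an element of $F$ restricts to an inner automorphism of $F$; hence $A_G=\langle \varphi_1,\ldots,\varphi_m,\Inn(BS(n,1))\rangle$. By hypothesis $\langle\varphi_1,\ldots,\varphi_m\rangle=\Aut(BS(n,1))$, and since $\Inn(BS(n,1))\leqslant \Aut(BS(n,1))$, we conclude $A_G=\Aut(BS(n,1))$.

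Finally, Theorem~\ref{thm: main 1 OD} asserts precisely that $\Aut(BS(n,1))$ is orbit decidable, so condition (c) of Theorem~\ref{thm: TCP to CP} holds for $G$; by the equivalence (a)$\Leftrightarrow$(c) this yields solvability of the conjugacy problem for $G$, as desired. The only step carrying content beyond bookkeeping is the verification of hypothesis (iii), and even there the work is standard structural theory of torsion-free hyperbolic groups; the substantive inputs, Theorems~\ref{thm: main TCP} and~\ref{thm: main 1 OD}, are already established, so the corollary follows as a clean application.
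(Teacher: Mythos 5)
Your proposal is correct and follows exactly the route the paper intends: the paper states this corollary as an immediate consequence of Theorem~\ref{thm: TCP to CP} (with hypotheses (ii) and (iii) for torsion-free hyperbolic groups handled as in~\cite{BMV}), combined with Theorem~\ref{thm: main TCP} for hypothesis (i) and Theorem~\ref{thm: main 1 OD} for orbit decidability of $A_G=\langle \varphi_1,\ldots,\varphi_m,\Inn(BS(n,1))\rangle=\Aut(BS(n,1))$. Your write-up simply makes explicit the standard verifications (cyclic centralizers with computable roots in torsion-free hyperbolic groups, algorithmicity of the sequence, identification of the action subgroup) that the paper leaves implicit, and all of them are sound.
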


\subsection*{Acknowledgements:}

The first and second authors gratefully acknowledge the Chennai Mathematical Institute (CMI), where this project was initiated. The first author thanks CMI and the National Board for Higher Mathematics (NBHM) for the travel grant that enabled her visit to the Universitat Polit\`ecnica de Catalunya (UPC). She is grateful to the INSPIRE Faculty Fellowship of the Department of Science and Technology (DST), Government of India. The second author thanks the support from UPC through a Margarita Salas grant. She also expresses her gratitude for the generous hospitality received from Harish-Chandra Research Institute. The second and third authors acknowledge support from the Spanish Agencia Estatal de Investigaci\'on through grant PID2021-126851NB-100 (AEI/ FEDER, UE). 

\vspace{0.2cm}

\end{document}